\newtheorem{theorem}{Theorem}[section]
\newtheorem{lemma}[theorem]{Lemma}
\newtheorem{proposition}[theorem]{Proposition}
\newtheorem{corollary}[theorem]{Corollary}
\theoremstyle{definition}
\theoremstyle{remark}
\newtheorem{remark}[theorem]{Remark}
\newtheorem{remarks}[theorem]{Remarks}
\numberwithin{equation}{section}
\newcommand{\R}{\mathbb R}
\newcommand{\cL}{\mathcal L}
\newcommand{\Lis}{\cL\mathrm{is}}
\newcommand{\identity}{\mathrm{Id}}
\DeclareMathOperator{\ran}{ran}
\DeclareMathOperator{\supp}{supp}
\DeclareMathOperator{\diam}{diam}
\DeclareMathOperator{\divv}{div}
\DeclareMathOperator{\curl}{curl}
\DeclareMathOperator{\Span}{span}
\newcommand{\be}{\begin{equation}}
\newcommand{\ee}{\end{equation}}
\newcommand\tenq[2][1]{%
 \def\useanchorwidth{T}%
  \ifnum#1>1%
    \stackunder[0pt]{\tenq[\numexpr#1-1\relax]{#2}}{\scriptscriptstyle\sim}%
  \else%
    \stackunder[1pt]{#2}{\scriptscriptstyle\sim}%
  \fi%
}
\newcommand\tens[2][1]{%
\ThisStyle{\tenq[#1]{\SavedStyle #2}}}
\par\begin{samepage}%
\title[A well-posed FOSLS formulation of the instationary Stokes equations]{A well-posed First Order System Least Squares formulation of the instationary Stokes equations}
\date{\today}
\author{Gregor Gantner and Rob Stevenson}
\address{Korteweg-de Vries (KdV) Institute for Mathematics, University of Amsterdam, P.O. Box 94248, 1090 GE Amsterdam, The Netherlands.}
\email{g.gantner@uva.nl, r.p.stevenson@uva.nl}
\thanks{The first author has been supported by the Austrian Science Fund (FWF) under grant J4379-N. The second author has been supported by NSF Grant DMS 172029.}
\subjclass[2020]{%
35F46, 
35K20, 
35A15, 
65M12, 
65M15, 
65M60, 
76D07} 
\keywords{Instationary Stokes equations, slip boundary conditions, simultaneous space-time variational formulation, First Order System Least Squares (FOSLS), finite elements with a commuting diagram}
\begin{document}

\begin{abstract} In this paper, a well-posed simultaneous space-time First Order System Least Squares formulation is constructed of the instationary incompressible Stokes equations with slip boundary conditions.
As a consequence of this well-posedness, the minimization over any conforming triple of finite element spaces for velocities, pressure and stress tensor gives a quasi-best approximation from that triple. The formulation is practical in the sense that all norms in the least squares functional can be efficiently evaluated. Being of least squares type, the formulation comes with an efficient and reliable a posteriori error estimator. In addition, a priori error estimates are derived, and numerical results are presented.
\end{abstract}

\maketitle

\section{Introduction}
This paper is about the numerical solution of the instationary incompressible Stokes equations in a simultaneous space-time variational formulation.
Compared to classical time-stepping schemes, space-time methods for evolutionary PDEs are much better suited for a massively parallel implementation (e.g.~\cite{234.7,306.65}), allow for local refinements simultaneously in space and time (e.g.~\cite{69.05,249.4,75.38,64.585,249.991,306.6}), produce numerical approximations that are quasi-best in the employed trial spaces, and are advantageous for all applications where the whole time-evolution is simultaneously needed as for optimal control or data-assimilation problems.

The usual bilinear form resulting from a space-time variational formulation is not coercive\footnote{`For parabolic problems `almost' 
coercive bilinear forms based on  $H^\frac12$-type spaces in time are considered in \cite{75.20,64.5765,249.3}.}.
Consequently, trial and test spaces of equal dimension that give a stable Petrov--Galerkin discretization are difficult to construct.
For this reason,   R.~Andreev proposed in~\cite{11} to use minimal residual discretizations for parabolic problems.
They have an equivalent interpretation as Galerkin discretizations of an extended self-adjoint, but indefinite mixed system having as secondary variable
 the Riesz lift of the residual of the primal variable.
  To ensure that the discrete solution is a quasi-best approximation from a pair of trial spaces,
  a remaining restriction is the Ladyshenskaja--Babu\u{s}ka--Brezzi (LBB) stability of this pair.
  
In \cite{75.257}, T.~F\"{u}hrer and M.~Karkulik showed well-posedness of a First Order System Least Squares (FOSLS) formulation of the heat equation that was proposed earlier in \cite{23.5}.
In the terminology from \cite{23.5}, this formulation is `practical' in the sense that the different components of the residual are all measured in $L_2$-type norms, which, unlike fractional Sobolev norms or dual norms, are efficiently computable.
The minimizer of this least squares functional over \emph{any} finite dimensional subspace of the solution space is a quasi-best approximation from that subspace.
Further results about this FOSLS can be found in \cite{75.28}.

The topic of the current work is to find a similar FOSLS formulation of the instationary incompressible Stokes equations posed on a time-space cylinder $(0,T) \times \Omega$.
The velocity component of the solution of these Stokes equations solves a parabolic evolution equation.
To use this fact as the basis of a numerical approximation scheme, however, one would have to construct trial spaces of divergence free velocities, which even in two space-dimensions is not very practical, and furthermore it does not yield an approximation for the pressure.

Well-posed space-time variational formulations of the instationary incompressible Stokes equations of second order for the \emph{pair} of velocities and pressure were given in \cite{77.3,247.155}.
So far these formulations have not been used for the construction of a numerical solution method. 
From these references it appears that no-slip and slip boundary conditions require a quite different treatment. 
In the current work, which is partly based on insights from \cite{77.3}, we consider slip boundary conditions.
An important work on the numerical treatment of slip boundary conditions in the setting of the 
(stationary) incompressible (Navier--)Stokes equations is \cite{308.3}.

Although in our FOSLS one component of the residual, viz.\ the (spatial) divergence  $\divv_x \tens{u}_\delta$
of the approximating velocity-field $\tens{u}_\delta$
 is not measured in the $L_2$-norm but in the $H^1((0,T);L_2(\Omega))$-norm, since also the latter norm can be efficiently evaluated the method is practical in the aforementioned sense.
 Since our proof of well-posedness of the FOSLS requires $L_2$-boundedness of the so-called Leray projector, our proof is restricted to domains $\Omega$ that are convex or have a $C^2$-boundary.
Numerical experiments that we will report on indicate that well-posedness is indeed lost when the $H^1((0,T);\tens{L}_2(\Omega))$-norm is replaced by the $L_2((0,T)\times\Omega)$-norm, and furthermore that likely our FOSLS is not (`fully') well-posed on an L-shaped domain.

Although efficiently evaluable, the term $\|\divv_x \tens{u}_\delta\|_{H^1((0,T);\tens{L}_2(\Omega))}$ in the least-squares functional causes two problems.
First, for $\tens{u}_\delta$ being a continuous piecewise polynomial w.r.t.\ an arbitrary polytopal partition of the time-space cylinder $(0,T) \times \Omega$, generally $\divv_x \tens{u}_\delta \not\in H^1((0,T);\tens{L}_2(\Omega))$. Since on the other hand $C^1$-finite elements are not very practical, as approximation spaces for the 
velocity-field $\tens{u}$, the pressure $p$, and the symmetric stress tensor $\tens[2]{w}:=\nu (\tens[2]{\nabla}_x \tens{u}+(\tens[2]{\nabla}_x \tens{u})^\top)-p\identity$, we consider finite element spaces w.r.t.\ (joint) partitions of $(0,T) \times \Omega$ into \emph{prismatic elements}.
Then global continuity of the functions in the trial space for the velocities suffices for this space to be conforming.
Secondly, when employing standard piecewise polynomial trial spaces w.r.t.~aforementioned partitions, the term $\|\divv_x \tens{u}_\delta\|_{H^1((0,T);\tens{L}_2(\Omega))}$
can be expected to restrict the convergence rate.
For that reason we will enrich such trial spaces with bubble functions as introduced in \cite{38.77}, which allow the construction of a quasi-interpolator 
that has a commuting diagram property, and so maps divergence-free functions to divergence-free functions.
\medskip

This paper is organized as follows: In the remainder of the current section we introduce some notations, and specify the instationary Stokes problem with slip boundary conditions.

In Section~\ref{sec:FOSLS} we present our FOSLS formulation of this Stokes problem and show that it is well-posed.
Initially, we consider a space for the pressure of functions that for each time $t$ have a zero average over the spatial domain. Since such a constraint is difficult to incorporate into a trial space of piecewise polynomials w.r.t.~a partition of the time-space cylinder that cannot be subdivided into time slabs, later we extend our FOSLS by adding this constraint to the least-squares functional.

In Section~\ref{sec:a priori} we construct suitable finite-dimensional trial spaces in the solution space of our FOSLS, and give a priori bounds for the error attained by the minimizer of the least squares functional over these trial spaces. Although not essential, here we restrict ourselves to two-dimensional spatial domains $\Omega$ and lowest order finite element spaces.
Furthermore, we consider only quasi-uniform partitions that are products of partitions of the temporal and the spatial domain, so that the finite element spaces are of tensor product form. Insisting on prismatic elements, locally refined partitions necessarily will contain `hanging nodes', which will be addressed in future work.

Aiming at minimizing regularity conditions needed to ensure a certain convergence rate, also for the approximation of the stress tensor $\tens[2]{w}$
we want to employ a trial space in the spatial direction that has an appropriate commuting diagram property.
We use the trial space introduced in \cite{38.78}, which has only 9 DOFs per element, and show such a commuting diagram property.

In Section~\ref{sec:numerics}, we report on several numerical experiments that illustrate that our FOSLS is also quantitatively well-conditioned, that likely the conditions on the spatial domain for well-posedness of our FOSLS can generally not be omitted,
that measuring the term $\divv_x \tens{u}_\delta$ in the $H^1((0,T);\tens L_2(\Omega))$-norm is indeed necessary,
and that likely well-posedness of our FOSLS does not generalize to no-slip boundary conditions.


\subsection{Notation}\label{sec:notation}
In this work, by $C \lesssim D$ we will mean that $C$ can be bounded by a multiple of $D$, independently of parameters on which C and D may depend. 
Obviously, $C \gtrsim D$ is defined as $D \lesssim C$, and $C\eqsim D$ as $C\lesssim D$ and $C \gtrsim D$.

For normed linear spaces $E$ and $F$, we will denote by $\cL(E,F)$ the normed linear space of bounded linear mappings $E \rightarrow F$, 
and by $\Lis(E,F)$ its subset of boundedly invertible linear mappings $E \rightarrow F$.
We write $E \hookrightarrow F$ to denote that $E$ is continuously embedded into $F$.
For simplicity only, we exclusively consider linear spaces over the scalar field $\R$.

For a Hilbert space $W$ that is dense and continuously embedded in a Hilbert space equipped with $L_2(\Sigma)^n$-norm, often we find it convenient to use the scalar product on $L_2(\Sigma)^n$ to denote its unique extension to the duality pairing on $W' \times W$.

We adopt the convention that an undertilde denotes vector-valued functions, operators, and their associated spaces. 
Double undertildes are used for matrix-valued functions, operators, and associated spaces. Here vectors and matrices are of size $n$ and $n \times n$, respectively, with $n$ being the dimension of the `spatial domain' $\Omega$. With $\tens[2]{\mathbb{S}}$ we denote the linear space of symmetric $n \times n$ matrices.
\subsection{Stokes problem with slip conditions}
Let $\Omega \subset \R^n$ be a bounded Lipschitz domain, and $I:=(0,T)$ for some $T>0$.
Given vector fields $\tens{u}_0$ on $\Omega$ and $\tens{f}$ on $I\times \Omega$, a function $g$ on $I \times \Omega$, and a viscosity $\nu>0$, we consider the instationary  Stokes problem with \emph{slip boundary} conditions of finding a velocity field
$\tens{u}$ and corresponding pressure $p$ 
that satisfy
\be \label{free-slip-equations}
\left\{
\begin{array}{@{}r@{}c@{}ll}
\tens{\partial}_t \tens{u} -\nu \tens{\Delta}_{x} \tens{u} +\nabla_{\bf x}\, p & \,\, = \,\,& \tens{f} &\text{ in } I \times \Omega,\\
\divv_x \tens{u}& \,\, = \,\,& g & \text{ in } I\times \Omega,\\
\tens{u} \cdot \tens{n}& \,\, = \,\,& 0 & \text{ on } I \times \partial \Omega,\\
(\identity-\tens{n} \tens{n}^\top)\tens[2]{T}(\nu \tens{u},p) \tens{n}& \,\, = \,\,&0&\text{ on } I\times \partial \Omega,\\
\tens{u}(0,\cdot) & \,\, = \,\,& \tens{u}_0 & \text{ on }  \Omega.
\end{array}
\right.
\ee
Here $(0,\tens{n}) \in \R^{n+1}$ denotes the outward pointing normal vector on $I \times \partial\Omega$,
and for $\tens{v}\colon I \times \Omega \rightarrow \R^n$ and $q\colon I \times \Omega \rightarrow\R$, the \emph{deformation tensor} and \emph{stress tensor} are defined by 
$\tens[2]{D}(\tens{v}):= \tens[2]{\nabla}_x \tens{v}+(\tens[2]{\nabla}_x \tens{v})^\top$ and
$\tens[2]{T}(\tens{v},q):=\tens[2]{D}(\tens{v})-q \identity$, respectively. 
Notice that the second boundary condition means that for all $\tens{\tau} \perp \tens{n}$, $(\tens[2]{D}(\tens{u}) \tens{n}) \cdot  \tens{\tau}=(\tens[2]{T}(\nu \tens{u},p) \tens{n}) \cdot  \tens{\tau}=0$
on $I \times\partial\Omega$.

Using that for a $\tens{v}$ that is twice continuously partially differentiable w.r.t.~the spatial variables, it holds that
$$
\tens{\divv}_x \tens[2]{D}(\tens{v})=\tens{\Delta}_{x} \tens{v} +\tens{\nabla_x} \divv_x \tens{v},
$$
we infer that a classical solution $(\tens{u} ,p)$ of \eqref{free-slip-equations} with $\tens[2]{w}:=-\tens[2]{T}(\nu\tens{u},p)$
satisfies
\be \label{defG}
{\bf G}(\tens{u},\tens[2]{w},p):=(\tens[2]{w}+\tens[2]{T}(\nu\tens{u},p), \tens{\partial}_t \tens{u} +\tens{\divv}_x \tens[2]{w},\divv_x \tens{u},\tens{u}(0,\cdot))=(0,\tens{f}+\nu \tens{\nabla}_x g,g,\tens{u}_0),
\ee
as well as $\tens{u} \cdot \tens{n}= 0$ on $I \times \partial \Omega$, and 
$(\identity-\tens{n} \tens{n}^\top)\tens[2]{w}\tens{n}=0$ on $I\times \partial \Omega$.
We will show that \eqref{defG} gives rise to a well-posed \emph{First Order System Least Squares} (FOSLS) formulation of \eqref{free-slip-equations}.
This means that ${\bf G}$ will be shown to be a boundedly invertible operator from some solution space onto its \emph{range} in the data space $\tens[2]{L}_2(I \times \Omega) \times \tens{L}_2(I \times \Omega)\times H^1(I;L_{2,0}(\Omega)) \times \tens{L}_2(\Omega)$.

\section{First order system least squares} \label{sec:FOSLS}

\subsection{The solution space for our FOSLS} \label{sec:solution-space}
We define
\begin{align*}
L_{2,0}(\Omega)&:=\{p \in L_2(\Omega)\colon \int_\Omega p \,dx=0\},
\intertext{being, by our assumption of $\Omega$ being bounded, a closed subspace of $L_2(\Omega)$, and} 
\tens{\mathbb{H}}^1(\Omega)&:=\{\tens{u} \in \tens{H}^1(\Omega) \colon \tens{u}\cdot\tens{n}=0 \text{ on } \partial\Omega \}.
\end{align*}
Knowing that $\tens{u} \mapsto \tens{u}|_{\partial\Omega}$ is in $\cL( \tens{H}^1(\Omega),\tens{H}^{\frac12}(\partial\Omega))$, and thus in 
$\cL( \tens{H}^1(\Omega),\tens{L}_2(\partial\Omega))$, from $\tens{n} \in L_\infty(\partial\Omega)$ we infer that
$\tens{u} \mapsto \tens{u}|_{\partial\Omega} \cdot\tens{n} \in \cL(\tens{H}^1(\Omega),L_2(\partial\Omega))$, and thus that $\tens{\mathbb{H}}^1(\Omega)$ is a closed subspace of $\tens{H}^1(\Omega)$.
We set
\begin{align*}
\mathscr{Z}:=\{(\tens{u},\tens[2]{w}) \in L_2(I;\tens{\mathbb{H}}^1(\Omega))&\times L_2(I;L_2(\Omega;\tens[2]{\mathbb{S}})) \colon
\tens{\partial}_t \tens{u}+\tens{\divv}_x \tens[2]{w} \in \tens{L}_2(I \times \Omega),\\
&\divv_x \tens{u} \in H^1(I;L_{2,0}(\Omega)), (\identity-\tens{n} \tens{n}^\top)\tens[2]{w}|_{I\times \partial \Omega}\tens{n}=0\},
\end{align*}
equipped with the (squared) graph norm
\begin{align*}
\|(\tens{u},\tens[2]{w})\|_\mathscr{Z}^2:=&\|\tens{u}\|^2_{L_2(I;\tens{\mathbb{H}}^1(\Omega))}+\|\tens[2]{w}\|^2_{\tens[2]{L}_2(I \times \Omega)}\\
&+\|\tens{\partial}_t \tens{u}+\tens{\divv}_x \tens[2]{w} \|_{ \tens{L}_2(I \times \Omega)}^2+\|\divv_x \tens{u}\|_{H^1(I;L_{2,0}(\Omega))}^2.
\end{align*}
The solution space of our FOSLS will be $\mathscr{Z} \times L_2(I;L_{2,0}(\Omega))$. We show that $\mathscr{Z}$, and so the solution space are Hilbert spaces.

\begin{lemma} $\mathscr{Z}$ is a Hilbert space.
\end{lemma}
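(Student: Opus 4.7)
The plan is to exhibit $\mathscr{Z}$ as a closed subspace of an ambient Hilbert space. The obvious candidate ambient space is $L_2(I;\tens{\mathbb{H}}^1(\Omega))\times L_2(I;L_2(\Omega;\tens[2]{\mathbb{S}}))$, which is Hilbert because $\tens{\mathbb{H}}^1(\Omega)$ is closed in $\tens{H}^1(\Omega)$ (already observed in the paper) and $\tens[2]{\mathbb{S}}$ is a closed subspace of the $n\times n$ matrices. Given a Cauchy sequence $(\tens{u}_k,\tens[2]{w}_k)$ in $\mathscr{Z}$, each of the four summands of $\|\cdot\|_\mathscr{Z}^2$ yields a Cauchy sequence in its respective Hilbert space, so $\tens{u}_k\to\tens{u}$ and $\tens[2]{w}_k\to\tens[2]{w}$ in the ambient space, while $\tens{\partial}_t\tens{u}_k+\tens{\divv}_x\tens[2]{w}_k\to\tens{v}$ in $\tens{L}_2(I\times\Omega)$ and $\divv_x\tens{u}_k\to h$ in $H^1(I;L_{2,0}(\Omega))$ for some $\tens{v}$ and $h$.

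Identifying $\tens{v}$ and $h$ with the ``correct'' expressions is routine. Since $L_2$-convergence implies distributional convergence on $I\times\Omega$, and since the operators $\tens{\partial}_t+\tens{\divv}_x$ and $\divv_x$ are continuous on $\mathcal{D}'(I\times\Omega)$, limits commute: $\tens{v}=\tens{\partial}_t\tens{u}+\tens{\divv}_x\tens[2]{w}$ and $h=\divv_x\tens{u}$. These identities automatically yield the two regularity requirements in the definition of $\mathscr{Z}$.

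The delicate point is propagating the boundary condition $(\identity-\tens{n}\tens{n}^\top)\tens[2]{w}|_{I\times\partial\Omega}\tens{n}=0$ through the limit, since $\tens[2]{w}\in L_2$ alone does not admit a pointwise, or even $\tens{H}^{-\frac12}$-, trace of $\tens[2]{w}\tens{n}$ on $\partial\Omega$. My plan is to interpret this condition via a generalized Green identity, which is made possible precisely by the hypothesis $\tens{\partial}_t\tens{u}+\tens{\divv}_x\tens[2]{w}\in\tens{L}_2(I\times\Omega)$: for every test function $\tens{\phi}\in C_c^\infty(I;\tens{C}^\infty(\bar\Omega))$ with $\tens{\phi}\cdot\tens{n}=0$ on $\partial\Omega$, integration by parts in time (legitimate since $\tens{\phi}$ has compact support in $I$) and in space (using symmetry of $\tens[2]{w}$ together with $\tens{\phi}\cdot\tens{n}=0$ to suppress the normal part of the boundary term) shows that the boundary condition is equivalent to
\[
\int_I\!\!\int_\Omega(\tens{\partial}_t\tens{u}+\tens{\divv}_x\tens[2]{w})\cdot\tens{\phi}\,dx\,dt
= -\int_I\!\!\int_\Omega\tens{u}\cdot\tens{\partial}_t\tens{\phi}\,dx\,dt
- \int_I\!\!\int_\Omega\tens[2]{w}\colon\tens[2]{\nabla}_x\tens{\phi}\,dx\,dt.
\]
Each term in this identity depends continuously on $(\tens{u},\tens[2]{w},\tens{\partial}_t\tens{u}+\tens{\divv}_x\tens[2]{w})$ in precisely the topologies in which the sequence converges, so the identity survives the passage to the limit. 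This gives $(\tens{u},\tens[2]{w})\in\mathscr{Z}$ and convergence $(\tens{u}_k,\tens[2]{w}_k)\to(\tens{u},\tens[2]{w})$ in the graph norm, completing the proof.
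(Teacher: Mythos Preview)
Your proof is correct and follows essentially the same approach as the paper: the routine part is the closedness of the differential constraints via distributional limits, and the crux is making sense of the tangential boundary condition on $\tens[2]{w}\tens{n}$ through an integration-by-parts identity against test fields $\tens{\phi}$ with $\tens{\phi}\cdot\tens{n}=0$ that vanish at $t=0,T$---which is exactly the paper's identity~\eqref{eq:partial integration}. The only difference is organizational: the paper builds $\mathscr{Z}$ as a nested chain of closed subspaces $\mathscr{Z}_1\supset\mathscr{Z}_2\supset\cdots\supset\mathscr{Z}$ (first imposing the $H(\mathrm{div})$-type constraint, then the boundary condition as the kernel of a bounded trace into $(\tens{V}/\tens{V}_0)'$, then symmetry, etc.), whereas you run a single Cauchy-sequence argument.
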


\begin{proof}
Realizing that $(\tens{\partial}_t \tens{u}+\tens{\divv}_x \tens[2]{w})_i$ is the divergence of the vector field $(\tens{u}_i,\tens[2]{w}_{i\cdot})^\top$, one infers that 
$$
\mathscr{Z}_1:=\{(\tens{u},\tens[2]{w})\in \tens{L}_2(I \times \Omega)\times \tens[2]{L}_2(I \times \Omega)\colon
\tens{\partial}_t \tens{u}+\tens{\divv}_x \tens[2]{w} \in \tens{L}_2(I \times \Omega)\},
$$
equipped with the graph norm, is a Hilbert space. 

Consider $\tens{V}:=H^1_0(I;\tens{L}_2(\Omega)) \cap L_2(I;\tens{\mathbb{H}}^1(\Omega))$, its closed subspace
$\tens{V}_0:=\{\tens{v} \in \tens{V}\colon  \tens{v}|_{I \times \partial\Omega}=0\}$, and the quotient space $\tens{V}/\tens{V}_0$ equipped with $\|[\tens{v}]\|_{\tens{V}/\tens{V}_0}:=\inf\{\|\tens{v}+\tens{v}_0\|_{\tens{V}}\colon \tens{v}_0\in \tens{V}_0\}$.
For smooth $(\tens{u},\tens[2]{w}) \in \mathscr{Z}_1$ and $\tens{v} \in \tens{V}$, integration-by-parts shows that
\begin{align}
\notag
\int_I \int_{\partial\Omega} 
(\identity-\tens{n} \tens{n}^\top)\tens[2]{w}\tens{n} \cdot \tens{v} \,ds \,dt&=
\int_I \int_{\partial\Omega} 
\tens[2]{w}\tens{n} \cdot \tens{v} \,ds \,dt+\int_{\Omega} (\tens{u}\cdot \tens{v})(T,\cdot)- (\tens{u}\cdot \tens{v})(0,\cdot)\,dx\\
\label{eq:partial integration}
&=\int_I  \int_\Omega (\tens{\partial}_t \tens{u}+\tens{\divv}_x \tens[2]{w})\cdot \tens{v} +\tens{u}\cdot \tens{\partial}_t\tens{v}+\tens[2]{w} : \tens[2]{\nabla}_x \tens{v} \,dx\,dt,
\end{align}
 and so $|\int_I \int_{\partial\Omega} 
(\identity-\tens{n} \tens{n}^\top)\tens[2]{w}\tens{n} \cdot \tens{v} \,ds \,dt|
\lesssim \|(\tens{u},\tens[2]{w})\|_{\mathscr{Z}_1} \|[\tens{v}]\|_{\tens{V}/\tens{V}_0}$. We conclude that
$(\tens{u},\tens[2]{w}) \mapsto (\identity-\tens{n} \tens{n}^\top)\tens[2]{w}\tens{n}\in \cL(\mathscr{Z}_1, (\tens{V}/\tens{V}_0)')$, so that
\be \label{H1}
\mathscr{Z}_2:=\{(\tens{u},\tens[2]{w})\in  \mathscr{Z}_1\colon (\identity-\tens{n} \tens{n}^\top)\tens[2]{w}|_{I\times \partial \Omega}\tens{n}=0\}
\ee
is a Hilbert space.

Because furthermore $(\tens{u},\tens[2]{w})\mapsto \tens[2]{w}-\tens[2]{w}^\top \in \cL(\mathscr{Z}_2, \tens[2]{L}_2(I \times \Omega))$, we have that
$$
\mathscr{Z}_3:=\{(\tens{u},\tens[2]{w})\in  \mathscr{Z}_2\colon \tens[2]{w} \in L_2(I;L_2(\Omega;\tens[2]{\mathbb{S}}))\}
$$
is a Hilbert space.

Knowing that $(\tens{u},\tens[2]{w})\mapsto\tens{\partial}_t \tens{u}+\tens{\divv}_x \tens[2]{w}$ is a closed map from its domain in $\tens{L}_2(I \times \Omega)\times \tens[2]{L}_2(I \times \Omega)$ into $ \tens{L}_2(I \times \Omega)$, it is a closed map from its domain in $L_2(I; \tens{\mathbb{H}}^1(\Omega))\times \tens[2]{L}_2(I \times \Omega)$ into $ \tens{L}_2(I \times \Omega)$, and so
$$
\mathscr{Z}_{4}:=\mathscr{Z}_{3}\cap   L_2(I; \tens{\mathbb{H}}^1(\Omega))\times \tens[2]{L}_2(I \times \Omega) 
$$
is a Hilbert space.

Let $(\tens{u}_k,\tens[2]{w}_k)$ a converging sequence in $\mathscr{Z}_{4}$ with limit $(\tens{u},\tens[2]{w})$ for which $\divv_x \tens{u}_k \rightarrow z$ in $H^1(I;L_{2,0}(\Omega))$. From $\tens{u}_k \rightarrow \tens{u}$ in $L_2(I; \tens{\mathbb{H}}^1(\Omega))$, we have $\divv_x \tens{u}_k \rightarrow \divv_x \tens{u}$ in $L_2(I \times \Omega)$, and 
apparently $z= \divv_x \tens{u}$. We conclude that 
$\mathscr{Z}$
is a Hilbert space.
\end{proof}

\begin{remark} \label{rem:density}
With $\mathcal{Z}_2:=\{(\tens{u},\tens[2]{w}) \in \tens{\mathcal D}(\overline{I \!\times\! \Omega})\!\times\! \tens[2]{\mathcal D}(\overline{I \! \times\! \Omega})\colon (\identity-\tens{n} \tens{n}^\top)\tens[2]{w}|_{I\times \partial \Omega}\tens{n}\!=\!0\}$ 
below we will need that $\overline{\mathcal{Z}_2}=\mathscr{Z}_2$. To show this we follow the arguments used in \cite[Thm.~2.4]{75.4}.
Let $\ell \in \mathscr{Z}_2'$ be arbitrary. There exists a $(\tens{\hat{u}},\tens[2]{\hat{w}}) \in \mathscr{Z}_2$ with
 $$
 \ell(\tens{u},\tens[2]{w})=\int_I \int_\Omega \tens{u} \cdot \tens{\hat{u}}+\tens[2]{w} \cdot \tens[2]{\hat{w}}+
(\tens{\partial}_t \tens{u}+\tens{\divv}_x \tens[2]{w})\hat{k}\,dx\,dt,
$$
where $\tens{\hat{k}}:=\tens{\partial}_t \tens{\hat{u}}+\tens{\divv}_x \tens[2]{\hat{w}}$. Suppose that $\ell$ vanishes on $\mathcal{Z}_2$. Then in particular it vanishes on $\tens{\mathcal D}(I \times \Omega)\times \tens[2]{\mathcal D}(I \times\Omega)$, from which it follows that $\tens{\hat{u}}=\partial_t \tens{\hat{k}}$ and $\tens[2]{\hat{w}}=\tens[2]{\nabla}_x \tens{\hat{k}}$, and so $\tens{\hat{k}} \in \tens{H}^1(I \times \Omega)$. Now using that  $\ell$ vanishes on $\mathcal{Z}_2$, integration-by-parts shows that $\tens{\hat{k}}(0,\cdot)=0=\tens{\hat{k}}(T,\cdot)$ and $\tens{\hat{k}}\cdot \tens{n}=0$ on $I \times\partial\Omega$, i.e., $\tens{\hat{k}} \in \tens{V}$. Now from \eqref{eq:partial integration} it follows that $\ell$ vanishes on the whole of $\mathscr{Z}_2$. We conclude that $\overline{\mathcal{Z}_2}=\mathscr{Z}_2$.
\end{remark}

The following lemma shows that for $(\tens{u},\tens[2]{w}) \in \mathscr{Z}$ it holds that $\partial_t \tens{u} \in L_2(I;\tens{\mathbb{H}}^1(\Omega)')$, which will be relevant later.


\begin{lemma} \label{lem2} It holds that $(\tens{u},\tens[2]{w})  \mapsto \tens{u} \in \cL\big(\mathscr{Z},H^1(I;\tens{\mathbb{H}}^1(\Omega)')\big)$.
\end{lemma}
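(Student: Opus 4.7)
The plan is to prove the two ingredients of membership in $H^1(I;\tens{\mathbb{H}}^1(\Omega)')$ separately. Membership in $L_2(I;\tens{\mathbb{H}}^1(\Omega)')$ is free: via the Gelfand triple $\tens{\mathbb{H}}^1(\Omega)\hookrightarrow \tens{L}_2(\Omega)\hookrightarrow \tens{\mathbb{H}}^1(\Omega)'$ one has $\|\tens{u}\|_{L_2(I;\tens{\mathbb{H}}^1(\Omega)')}\lesssim \|\tens{u}\|_{L_2(I;\tens{L}_2(\Omega))}\lesssim \|(\tens{u},\tens[2]{w})\|_{\mathscr{Z}}$. All the real work is in controlling the distributional time derivative.

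Formally $\partial_t\tens{u}=\tens{f}-\tens{\divv}_x\tens[2]{w}$ with $\tens{f}:=\tens{\partial}_t\tens{u}+\tens{\divv}_x\tens[2]{w}\in\tens{L}_2(I\times\Omega)$ supplied directly by the definition of $\mathscr{Z}$. The term $\tens{\divv}_x\tens[2]{w}$ does not live in any convenient Hilbert space on its own, but against a tangential test $\tens{v}\in\tens{\mathbb{H}}^1(\Omega)$ the boundary condition $(\identity-\tens{n}\tens{n}^\top)\tens[2]{w}|_{I\times\partial\Omega}\tens{n}=0$ built into $\mathscr{Z}$ precisely kills the would-be boundary contribution in integration by parts, since $\tens[2]{w}\tens{n}\cdot\tens{v} = ((\identity-\tens{n}\tens{n}^\top)\tens[2]{w}\tens{n})\cdot\tens{v}$ on $\partial\Omega$ by symmetry of $\identity-\tens{n}\tens{n}^\top$ and $\tens{v}\cdot\tens{n}=0$. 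This motivates defining, for a.e.~$t\in I$,
$$
\langle\tens{g}(t),\tens{v}\rangle:=\int_\Omega \tens{f}(t,\cdot)\cdot\tens{v}\,dx+\int_\Omega \tens[2]{w}(t,\cdot):\tens[2]{\nabla}_x\tens{v}\,dx \qquad(\tens{v}\in\tens{\mathbb{H}}^1(\Omega)).
$$
Cauchy--Schwarz gives $\|\tens{g}(t)\|_{\tens{\mathbb{H}}^1(\Omega)'}\lesssim\|\tens{f}(t,\cdot)\|_{\tens{L}_2(\Omega)}+\|\tens[2]{w}(t,\cdot)\|_{\tens[2]{L}_2(\Omega)}$, whence $\tens{g}\in L_2(I;\tens{\mathbb{H}}^1(\Omega)')$ with norm $\lesssim\|(\tens{u},\tens[2]{w})\|_{\mathscr{Z}}$.

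The main step is to identify $\tens{g}$ with the distributional time derivative $\partial_t\tens{u}$. I would invoke the integration-by-parts identity \eqref{eq:partial integration} from the proof of the previous lemma, which holds for smooth $(\tens{u},\tens[2]{w})\in\mathscr{Z}_1$ tested against $\tens{v}\in\tens{V}$. Applied to smooth elements of $\mathcal{Z}_2$---dense in $\mathscr{Z}_2\supset\mathscr{Z}$ by Remark~\ref{rem:density}---the left-hand side of \eqref{eq:partial integration} vanishes identically thanks to the boundary condition, and passing to the limit in the $\mathscr{Z}_2$-norm yields
$$
\int_I\!\int_\Omega \tens{f}\cdot\tens{v}+\tens{u}\cdot\tens{\partial}_t\tens{v}+\tens[2]{w}:\tens[2]{\nabla}_x\tens{v}\,dx\,dt=0\qquad(\tens{v}\in\tens{V}).
$$
Specializing to separable fields $\tens{v}(t,x)=\varphi(t)\tens{v}_0(x)$ with $\varphi\in C_c^\infty(I)$ and $\tens{v}_0\in\tens{\mathbb{H}}^1(\Omega)$ (which do lie in $\tens{V}$), the identity rearranges to $\int_I\varphi(t)\langle\tens{g}(t),\tens{v}_0\rangle\,dt=-\int_I\varphi'(t)\int_\Omega\tens{u}(t,\cdot)\cdot\tens{v}_0\,dx\,dt$, which is exactly $\partial_t\tens{u}=\tens{g}$ as an $\tens{\mathbb{H}}^1(\Omega)'$-valued distribution on $I$. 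Combined with the $L_2(I;\tens{\mathbb{H}}^1(\Omega)')$-bound on $\tens{g}$, the required continuous embedding follows.

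The main obstacle is conceptual rather than computational: because $\tens{\divv}_x\tens[2]{w}$ has no stand-alone meaning in any of the spaces at hand, $\partial_t\tens{u}$ and $\tens{\divv}_x\tens[2]{w}$ must be handled jointly through \eqref{eq:partial integration}, and Remark~\ref{rem:density} is precisely what promotes that smooth-case identity to arbitrary elements of $\mathscr{Z}$, while the slip-type boundary condition is precisely what makes it testable against the tangential space $\tens{\mathbb{H}}^1(\Omega)$.
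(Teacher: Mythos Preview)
Your proof is correct and follows essentially the same route as the paper: both hinge on the integration-by-parts identity \eqref{eq:partial integration} together with the boundary condition in $\mathscr{Z}$ to kill the boundary term when testing against $\tens{\mathbb{H}}^1(\Omega)$, and both invoke the density of $\mathcal{Z}_2$ in $\mathscr{Z}_2$ from Remark~\ref{rem:density}. The paper organizes this slightly more concisely by first extending $\tens{\divv}_x\tens[2]{w}$ itself to an element of $L_2(I;\tens{\mathbb{H}}^1(\Omega)')$ via \eqref{4} and then applying the triangle inequality $\|\tens{\partial}_t\tens{u}\|\le\|\tens{\partial}_t\tens{u}+\tens{\divv}_x\tens[2]{w}\|+\|\tens{\divv}_x\tens[2]{w}\|$, rather than defining the combined functional $\tens{g}$ and identifying it with $\partial_t\tens{u}$ afterwards, but the substance is the same.
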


\begin{proof} For $(\tens{u},\tens[2]{w}) \in \mathcal{Z}_2$ and 
$\tens{v} \in L_2(I;\tens{\mathbb{H}}^1(\Omega))$,
one has
\be \label{4}
\begin{split}
\int_I \int_\Omega \tens[2]{w}:\tens[2]{\nabla}_x \tens{v}\,dx\,dt&=-\int_I \int_\Omega \tens{\divv}_x \tens[2]{w} \cdot  \tens{v}\,dx\,dt+
\int_I \int_{\partial\Omega} \tens[2]{w}\tens{n} \cdot \tens{v} \,ds\,dt\\
&=-\int_I \int_\Omega \tens{\divv}_x \tens[2]{w} \cdot  \tens{v}\,dx\,dt.
\end{split}
\ee
i.e., $ \tens{\divv}_x \tens[2]{w}$ extends to an element in $L_2(I;\tens{\mathbb{H}}^1(\Omega)')$, with norm bounded by $\|\tens[2]{w}\|_{\tens[2]{L}_2(I \times \Omega)}$. 
We conclude that
\begin{align*}
\|\tens{\partial}_t \tens{u}\|_{L_2(I;\tens{\mathbb{H}}^1(\Omega)')} &\leq 
\|\tens{\partial}_t \tens{u}+\tens{\divv}_x \tens[2]{w}\|_{L_2(I;\tens{\mathbb{H}}^1(\Omega)')}+
\|\tens{\divv}_x \tens[2]{w}\|_{L_2(I;\tens{\mathbb{H}}^1(\Omega)')}\\
& \leq 
\|\tens{\partial}_t \tens{u}+\tens{\divv}_x \tens[2]{w}\|_{\tens{L}_2(I\times \Omega)}+
\|\tens[2]{w}\|_{\tens[2]{L}_2(I \times \Omega)}.
\end{align*}
By $\overline{\mathcal{Z}_2}=\mathscr{Z}_2$ this result extends to all $(\tens{u},\tens[2]{w}) \in \mathscr{Z}_2$, and thus to all  $(\tens{u},\tens[2]{w}) \in \mathscr{Z}$.
\end{proof}

\subsection{Parabolic PDE on the space of divergence-free velocities} \label{sec:div-free}
In order to be able to show that
\be \label{17}
 \|(\tens{u},\tens[2]{w},p)\|_{\mathscr{Z} \times L_2(I;L_{2,0}(\Omega))}
 \lesssim \|{\bf G}(\tens{u},\tens[2]{w},p)\|_{\tens[2]{L}_2(I \times \Omega) \times \tens{L}_2(I \times \Omega)\times H^1(I;L_{2,0}(\Omega)) \times \tens{L}_2(\Omega)},
 \ee
 we will use that a divergence-free velocity field $\tens{u}$ is the solution of a well-posed parabolic evolution problem. 

We define
\begin{align*}
\tens{\mathcal H}^1(\Omega)&:=\{\tens{u} \in \tens{\mathbb{H}}^1(\Omega) \colon \divv \tens{u} =0 \},\\
\tens{\mathcal H}^0(\Omega)&:=\{\tens{u} \in \tens{L}_2(\Omega) \colon \divv \tens{u} =0,\, \tens{u}\cdot\tens{n}=0 \text{ on } \partial\Omega\}.
\end{align*}
Then $\tens{\mathcal H}^1(\Omega) \hookrightarrow \tens{\mathcal H}^0(\Omega)$ with dense embedding, see, e.g., \cite[Thm.~2.8]{75.5}, and so
$\tens{\mathcal H}^1(\Omega) \hookrightarrow \tens{\mathcal H}^0(\Omega) \simeq \tens{\mathcal H}^0(\Omega)'  \hookrightarrow\tens{\mathcal H}^1(\Omega)'$ forms a Gelfand triple.

For almost all $t \in I$, let $a(t;\cdot,\cdot)$ be a bilinear form on $\tens{\mathcal H}^1(\Omega) \times \tens{\mathcal H}^1(\Omega)$ such that 
for any $\tens \mu,\tens \lambda\in \tens{\mathcal H}^1(\Omega)$, $t\mapsto a(t;\tens \mu,\tens \lambda)$ is measurable on $I$, and such that 
for some constant $\varrho \geq 0$,
for a.e.~$t \in I$, and all $\tens \mu,\tens \lambda$,
\begin{alignat}{2} \label{boundedness}
|a(t;\tens \mu,\tens \lambda)| 
& \lesssim  \|\tens \mu\|_{\tens{\mathcal H}^1(\Omega)} \|\tens\lambda\|_{\tens{\mathcal H}^1(\Omega)}  \quad &&\text{({\em boundedness})},
\\ \label{garding}
 a(t;\tens \mu,\tens \mu)  +\varrho \|\tens \mu\|_{\tens{\mathcal H}^0(\Omega)} ^2 &\gtrsim \|\tens \mu\|^2_{\tens{\mathcal H}^1(\Omega)}  \quad &&\text{({\em G{\aa}rding inequality})}.
\end{alignat}

A proof of the following result can be found, e.g.\ in \cite[Ch.~IV, \S26]{314.91}, \cite[Ch.~XVIII, \S3]{63}, or \cite{247.15}.
\begin{theorem}\label{thm1} Assuming \eqref{boundedness} and \eqref{garding}, the operator $B$ defined by
$$
(B\tens{u})(\tens{v}):= \int_I \int_\Omega \partial_t \tens{u}(t,x) \cdot \tens{v}(t,x) \,dx
+a(t;u(t),v(t)) dt,
$$
satisfies 
$$
\left[\begin{array}{@{}c@{}} B \\ \gamma_0\end{array} \right]\in \Lis\big(L_2(I;\tens{\mathcal H}^1(\Omega)) \cap H^1(I;\tens{\mathcal H}^1(\Omega)'),L_2(I;\tens{\mathcal H}^1(\Omega))' \times \tens{\mathcal H}^0(\Omega)\big),
$$
where $\gamma_0:= \tens{u}\mapsto \tens{u}|_{t=0}$.
\end{theorem}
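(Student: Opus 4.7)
The statement is Lions' classical well-posedness theorem for abstract parabolic equations in a Gelfand triple, and the authors themselves merely cite standard references. My plan is to sketch how one carries out the argument; the abstract framework is already in place in the paper, as $\tens{\mathcal H}^1(\Omega)\hookrightarrow \tens{\mathcal H}^0(\Omega)\hookrightarrow \tens{\mathcal H}^1(\Omega)'$ is a Gelfand triple of separable Hilbert spaces, and \eqref{boundedness}--\eqref{garding} are precisely the standard hypotheses.

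First, I would reduce to the case $\varrho=0$ by the substitution $\tens{\tilde u}(t,\cdot) := e^{-\varrho t}\tens u(t,\cdot)$, which is an automorphism of $L_2(I;\tens{\mathcal H}^1(\Omega))\cap H^1(I;\tens{\mathcal H}^1(\Omega)')$ with norms equivalent up to a factor depending only on $\varrho T$, preserves $\tens u(0,\cdot)$, and replaces $a$ by $\tilde a(t;\tens \mu,\tens \lambda) := a(t;\tens \mu,\tens \lambda)+\varrho(\tens \mu,\tens \lambda)_{\tens{\mathcal H}^0(\Omega)}$, which by \eqref{garding} is bounded and uniformly coercive on $\tens{\mathcal H}^1(\Omega)$. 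Boundedness of $B$ is then immediate from \eqref{boundedness}, and boundedness of $\gamma_0$ rests on the classical trace embedding $L_2(I;\tens{\mathcal H}^1(\Omega))\cap H^1(I;\tens{\mathcal H}^1(\Omega)')\hookrightarrow C(\bar I;\tens{\mathcal H}^0(\Omega))$.

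For invertibility, I would apply the standard Galerkin construction. Taking an $\tens{\mathcal H}^0(\Omega)$-orthogonal Hilbert basis $\{\tens e_k\}_{k\in\N}$ of $\tens{\mathcal H}^1(\Omega)$ and setting $V_m := \Span\{\tens e_1,\ldots,\tens e_m\}$, each Galerkin approximant $\tens u_m \in H^1(I;V_m)$ exists and is unique by linear ODE theory. Testing with $\tens u_m(t)$ itself, invoking coercivity, and using the identity $\frac{d}{dt}\|\tens u_m\|^2_{\tens{\mathcal H}^0(\Omega)} = 2\langle \partial_t \tens u_m,\tens u_m\rangle$ yields the uniform estimate
\[
\sup_{t\in\bar I} \|\tens u_m(t)\|^2_{\tens{\mathcal H}^0(\Omega)} + \|\tens u_m\|^2_{L_2(I;\tens{\mathcal H}^1(\Omega))} \lesssim \|F\|^2_{L_2(I;\tens{\mathcal H}^1(\Omega))'}+\|\tens u_0\|^2_{\tens{\mathcal H}^0(\Omega)},
\]
and the Galerkin equation itself then bounds $\partial_t \tens u_m$ in $L_2(I;\tens{\mathcal H}^1(\Omega)')$. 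A weakly convergent subsequence supplies a solution, and uniqueness together with the reverse a priori bound $\|\tens u\|_X \lesssim \|B\tens u\|+\|\gamma_0 \tens u\|$ follow from the same energy argument applied directly to $\tens u$. The one genuinely technical ingredient is the density of smooth functions in the trial space, which both underlies the trace embedding and justifies the integration-by-parts identity used to close the energy estimate; this is the only place where the Gelfand-triple structure really enters, and it is worked out in full in the Lions--Magenes material contained in the cited references.
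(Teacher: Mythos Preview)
Your proposal is correct and matches exactly what the paper does: the authors do not prove Theorem~\ref{thm1} themselves but cite the standard references (Wloka, Dautray--Lions, Schwab--Stevenson), and your sketch---the exponential shift to reduce to the coercive case, the Galerkin approximation with energy estimates, weak compactness, and the trace embedding into $C(\bar I;\tens{\mathcal H}^0(\Omega))$---is precisely Lions' classical argument contained in those sources.
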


To motivate the following Proposition~\ref{prop1}, we summarize the main steps of the forthcoming proof of \eqref{17}  in Section~\ref{Swellposedness}.
Consider a triple $(\tens{u},\tens[2]{w},p) \in \mathscr{Z} \times L_2(I;L_{2,0}(\Omega))$, for simplicity here with $\divv_x \tens{u}=0$.
Setting 
$$
\tens{f}(\tens{v}):=\int_I \int_\Omega \tens{\partial}_t \tens{u} \cdot \tens{v}+\tfrac12 \tens[2]{T}(\nu \tens{u},p):\tens[2]{D}(\tens{v})\,dx\,dt,
$$
integration-by-parts \eqref{4} and the triangle inequality show that
\be \label{18}
\|\tens{f}\|_{L_2(I;\tens{\mathbb{H}}^1(\Omega)')}\lesssim
\|\tens[2]{w}+\tens[2]{T}(\nu \tens{u},p)\|_{\tens[2]{L}_2(I\times \Omega)}+
\|\tens{\partial}_t \tens{u}+\tens{\divv}_x \tens[2]{w}\|_{\tens{L}_2(I\times \Omega)}.
\ee
Since $\tens{f}(\tens{v})=\int_I \int_\Omega \tens{\partial}_t \tens{u} \cdot \tens{v}+\tfrac{\nu}{2} \tens[2]{D}(\tens{u}):  \tens[2]{D}(\tens{v}) \,dx\,dt$ for $\tens v \in L_2(I;\tens{\mathcal H}^1(\Omega))$, an application of Theorem~\ref{thm1} will give
\be \label{19}
\|\tens{u}\|_{L_2(I;\tens{\mathcal H}^1(\Omega))}+\|\tens{\partial}_t \tens{u}\|_{L_2(I;\tens{\mathcal H}^1(\Omega)')}
\lesssim \|\tens f\|_{L_2(I;\tens{\mathcal H}^1(\Omega)')}+\|\tens u(0,\cdot)\|_{\tens{\mathcal H}^0(\Omega)},
\ee
whereas for $\tens{v} \in L_2(I;\tens{\mathbb{H}}^1(\Omega))$,
$$
\int_I \int_\Omega p \divv_x\tens{v} \,dx\,dt=\int_I \int_\Omega \tens{\partial}_t \tens{u}\cdot \tens{v} +\tfrac{\nu}{2} \tens[2]{D}(\tens{u}): \tens[2]{D}(\tens{v})\,dx\,dt-\tens{f}(\tens{v}).
$$
Using that 
$
\inf_{0 \neq q \in L_{2,0}(\Omega)} \sup_{0 \neq \tens{v} \in \tens{\mathbb{H}}^1(\Omega)}
\frac{|\int_\Omega q \divv_x\tens{v} \,dx|}{\|q\|_{ L_{2,0}(\Omega)} \|\tens{v}\|_{\tens{\mathbb{H}}^1(\Omega)}} >0,
$
one arrives at
\be \label{20}
\|p\|_{L_2(I;L_{2,0}(\Omega))}  \lesssim \| \tens{\partial}_t \tens{u}\|_{L_2(I;\tens{\mathbb{H}}^1(\Omega)')}+\|\tens{u}\|_{L_2(I;\tens{\mathbb{H}}^1(\Omega))}+
\|\tens f\|_{L_2(I;\tens{\mathbb{H}}^1(\Omega)')}.
\ee
Since $\|\tens[2]{w}\|_{\tens[2]{L}_2(I \times \Omega)} \lesssim \|\tens[2]{w}+\tens[2]{T}(\nu \tens{u},p)\|_{\tens[2]{L}_2(I \times \Omega)}+\|\tens{u}\|_{L_2(I;\tens{\mathbb{H}}^1(\Omega))}+\|p\|_{L_2(I;L_{2,0}(\Omega))}$,
the combination of \eqref{20}, \eqref{19}, and, 
 \eqref{18} shows \eqref{17} \emph{if}, in addition,  $\|\tens{\partial}_t \tens{u}\|_{L_2(I;\tens{\mathbb{H}}^1(\Omega)')} \lesssim \|\tens{\partial}_t \tens{u}\|_{L_2(I;\tens{\mathcal H}^1(\Omega)')}$, which is shown in Proposition~\ref{prop1}.

\begin{remark}The above relates to a discussion in \cite[Sect.~1.1]{5.7} where, in the stationary case (and, actually, for no-slip boundary conditions), it is explained that forcings $\tens f$ should be interpreted as functionals in $\tens{\mathcal H}^1(\Omega)'$ when it concerns their influence on the velocity, whereas the pressure depends on their interpretation as elements of $\tens{\mathbb{H}}^1(\Omega)'$.
\end{remark}

\begin{proposition} \label{prop1} Assume that the $\tens{L}_2(\Omega)$-orthogonal projector $\tens{\Pi}$ onto $\tens{\mathcal H}^0(\Omega)$, also known as the \emph{Leray-projector}, satisfies
$\tens{\Pi} \in \cL(\tens{\mathbb{H}}^1(\Omega),\tens{\mathbb{H}}^1(\Omega))$.
Then for $\tens{u} \in L_2(I;\tens{\mathcal H}^1(\Omega)) \cap H^1(I;\tens{\mathcal H}^1(\Omega)')$, it holds that
$$
\|\tens{\partial}_t \tens{u}\|_{L_2(I;\tens{\mathbb{H}}^1(\Omega)')} \eqsim \|\tens{\partial}_t \tens{u}\|_{L_2(I;\tens{\mathcal H}^1(\Omega)')}.
$$
\end{proposition}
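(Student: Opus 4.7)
The plan is to split the equivalence into its two directions. The bound
\[
\|\tens{\partial}_t\tens{u}\|_{L_2(I;\tens{\mathcal H}^1(\Omega)')} \leq \|\tens{\partial}_t\tens{u}\|_{L_2(I;\tens{\mathbb{H}}^1(\Omega)')}
\]
is essentially free: since $\tens{\mathcal H}^1(\Omega)$ sits as a closed subspace of $\tens{\mathbb{H}}^1(\Omega)$ with the inherited norm, restricting any functional to the smaller space can only decrease its dual norm, and this survives integration in $t$. The content of the proposition therefore lies in the reverse inequality, which I would obtain by constructing, via the Leray projector, a controlled extension of $\tens{\partial}_t\tens{u}(t)\in \tens{\mathcal H}^1(\Omega)'$ to $\tens{\mathbb{H}}^1(\Omega)'$.

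First I would upgrade the hypothesis $\tens{\Pi}\in\cL(\tens{\mathbb{H}}^1(\Omega),\tens{\mathbb{H}}^1(\Omega))$ to $\tens{\Pi}\in\cL(\tens{\mathbb{H}}^1(\Omega),\tens{\mathcal H}^1(\Omega))$: since the range of $\tens{\Pi}$ is $\tens{\mathcal H}^0(\Omega)$ by definition and $\tens{\mathbb{H}}^1(\Omega)\cap \tens{\mathcal H}^0(\Omega)=\tens{\mathcal H}^1(\Omega)$, the image of any $\tens{v}\in\tens{\mathbb{H}}^1(\Omega)$ under $\tens{\Pi}$ is automatically divergence-free and tangential, so lies in $\tens{\mathcal H}^1(\Omega)$, with norm controlled by a multiple of $\|\tens{v}\|_{\tens{\mathbb{H}}^1(\Omega)}$. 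For a.e.~$t\in I$ I then define the functional
\[
\widetilde{\tens{\partial}_t\tens{u}(t)}\colon \tens{v}\mapsto \langle \tens{\partial}_t\tens{u}(t),\tens{\Pi}\tens{v}\rangle_{\tens{\mathcal H}^1(\Omega)'\times \tens{\mathcal H}^1(\Omega)}\qquad (\tens{v}\in\tens{\mathbb{H}}^1(\Omega))
\]
and identify it with the weak time-derivative of $\tens{u}$ viewed in $\tens{\mathbb{H}}^1(\Omega)'$. The crucial cancellation is that since $\tens{u}(t)\in \tens{\mathcal H}^0(\Omega)$ while $\tens{v}-\tens{\Pi}\tens{v}\in \tens{\mathcal H}^0(\Omega)^\perp$, one has $(\tens{u}(t),\tens{v})_{\tens{L}_2(\Omega)}=(\tens{u}(t),\tens{\Pi}\tens{v})_{\tens{L}_2(\Omega)}$; testing the distributional derivative against $\psi(t)\tens{v}$ with $\psi\in C_c^\infty(I)$ then yields
\[
-\!\int_I\psi'(t)(\tens{u}(t),\tens{v})_{\tens{L}_2(\Omega)}\,dt=-\!\int_I\psi'(t)(\tens{u}(t),\tens{\Pi}\tens{v})_{\tens{L}_2(\Omega)}\,dt=\int_I\psi(t)\langle\tens{\partial}_t\tens{u}(t),\tens{\Pi}\tens{v}\rangle\,dt,
\]
confirming that $\widetilde{\tens{\partial}_t\tens{u}(\cdot)}$ is precisely the weak derivative of $\tens{u}$ as a map into $\tens{\mathbb{H}}^1(\Omega)'$.

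With the identification in hand the bound is routine:
\[
|\widetilde{\tens{\partial}_t\tens{u}(t)}(\tens{v})|\leq \|\tens{\partial}_t\tens{u}(t)\|_{\tens{\mathcal H}^1(\Omega)'}\|\tens{\Pi}\tens{v}\|_{\tens{\mathcal H}^1(\Omega)}\lesssim\|\tens{\partial}_t\tens{u}(t)\|_{\tens{\mathcal H}^1(\Omega)'}\|\tens{v}\|_{\tens{\mathbb{H}}^1(\Omega)},
\]
so that taking the supremum over $\tens{v}$, squaring, and integrating in $t$ closes the proof. The main obstacle is not the inequality itself but the justification that $\tens{v}\mapsto\langle\tens{\partial}_t\tens{u}(t),\tens{\Pi}\tens{v}\rangle$ is genuinely the extension of $\tens{\partial}_t\tens{u}(t)$ to $\tens{\mathbb{H}}^1(\Omega)'$ dictated by the distributional time-derivative of $\tens{u}$; this is where the $\tens{L}_2$-orthogonality between $\tens{\mathcal H}^0(\Omega)$ and its complement -- and hence the Leray-projector hypothesis -- is indispensable.
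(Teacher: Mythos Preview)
Your proof is correct and rests on the same mechanism as the paper's: for $\tens{u}$ taking values in $\tens{\mathcal H}^0(\Omega)$, pairing against $\tens{v}\in\tens{\mathbb{H}}^1(\Omega)$ equals pairing against $\tens{\Pi}\tens{v}\in\tens{\mathcal H}^1(\Omega)$, and the assumed $\tens{\mathbb{H}}^1$-boundedness of $\tens{\Pi}$ then transfers the dual norm.

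The execution differs in one respect. The paper first establishes the inequality for $\tens{u}\in H^1(I;\tens{\mathcal H}^1(\Omega))$, where $\tens{\partial}_t\tens{u}(t)$ is an honest $\tens{L}_2$-function and the identity $\tens{\partial}_t\tens{u}=(\identity\otimes\tens{\Pi})\tens{\partial}_t\tens{u}$ can be applied pointwise; it then passes to general $\tens{u}\in L_2(I;\tens{\mathcal H}^1(\Omega))\cap H^1(I;\tens{\mathcal H}^1(\Omega)')$ by a density-and-Cauchy-sequence argument. You instead apply the projector identity to $\tens{u}(t)$ itself (rather than to its derivative), and use the distributional characterization of the weak time-derivative to identify $\tens{v}\mapsto\langle\tens{\partial}_t\tens{u}(t),\tens{\Pi}\tens{v}\rangle$ as the $\tens{\mathbb{H}}^1(\Omega)'$-valued derivative directly. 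This is slightly more elementary: it dispenses with the density of $H^1(I;\tens{\mathcal H}^1(\Omega))$ in the intersection space and with the closing completeness argument, at the modest cost of having to spell out the distributional identification you flag at the end.
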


\begin{proof} 
The inequality `$\gtrsim$'. trivially follows from the inclusion $\tens{\mathcal H}^{1}(\Omega)\subset \tens{\mathbb{H}}^1(\Omega)$.
For the other inequality, let $\tens{u} \in H^1(I;\tens{\mathcal H}^{1}(\Omega))$.
It holds that
\begin{align*}
\|\tens{\partial}_t\tens{u}\|_{L_2(I;\tens{\mathbb{H}}^1(\Omega)')}
&=\sup_{0 \neq \tens{v} \in L_2(I;\tens{\mathbb{H}}^1(\Omega))} \frac{\int_I \int_\Omega \tens{\partial}_t\tens{u} \cdot \tens{v}\,dx\,dt}{\|\tens{v}\|_{L_2(I;\tens{\mathbb{H}}^1(\Omega))}}\\
&=\sup_{0 \neq \tens{v} \in L_2(I;\tens{\mathbb{H}}^1(\Omega))} \frac{\int_I \int_\Omega (\identity \otimes \tens{\Pi}) \tens{\partial}_t\tens{u} \cdot \tens{v}\,dx\,dt}{\|\tens{v}\|_{L_2(I;\tens{\mathbb{H}}^1(\Omega))}}\\
&=\sup_{0 \neq \tens{v} \in L_2(I;\tens{\mathbb{H}}^1(\Omega))} \frac{\int_I \int_\Omega  \tens{\partial}_t\tens{u} \cdot (\identity \otimes \tens{\Pi}) \tens{v}\,dx\,dt}{\|\tens{v}\|_{L_2(I;\tens{\mathbb{H}}^1(\Omega))}}\\
&\leq 
 \|\tens{\Pi}\|_{\cL(\tens{\mathbb{H}}^1(\Omega), \tens{\mathbb{H}}^1(\Omega))} 
 \sup_{0 \neq \tens{v} \in L_2(I;\tens{\mathcal H}^1(\Omega))} \frac{\int_I \int_\Omega  \tens{\partial}_t\tens{u} \cdot  \tens{v}\,dx\,dt}{\|\tens{v}\|_{L_2(I;\tens{\mathbb{H}}^1(\Omega))}}\\
 &= \|\tens{\Pi}\|_{\cL(\tens{\mathbb{H}}^1(\Omega), \tens{\mathbb{H}}^1(\Omega))} \|\tens{\partial}_t\tens{u}\|_{L_2(I;\tens{\mathcal H}^1(\Omega)')}.
\end{align*}
So for $H^1(I;\tens{\mathcal H}^{1}(\Omega)) \supset (\tens{u}_k)_k \rightarrow \tens{u} \in \tens{\mathcal X}:= L_2(I;\tens{\mathcal H}^1(\Omega)) \cap H^1(I;\tens{\mathcal H}^1(\Omega)')$,
$(\tens{u}_k)_k$ is a Cauchy sequence in $\tens{X}:=L_2(I;\tens{\mathcal H}^1(\Omega)) \cap H^1(I;\tens{\mathbb{H}}^1(\Omega)')$, 
and therefore convergent to some $\tens{z} \in \tens{X}$. But then $(\tens{u}_k)_k \rightarrow \tens{z}$ also in $ \tens{\mathcal X}$, or $\tens{z}=\tens{u}$,
and $\|\tens{u}\|_{\tens{X}} = \lim_{n \rightarrow \infty} \|\tens{u}_k\|_{\tens{X}} \lesssim \lim_{n \rightarrow \infty} \|\tens{u}_k\|_{\tens{\mathcal X}}=\|\tens{u}\|_{\tens{\mathcal X}}$, which completes the proof.
\end{proof}

\begin{remark}[no-slip conditions]\label{rem:noslip} 
By replacing  the boundary conditions 
$\tens{u} \cdot \tens{n}= 0$ and $(\identity-\tens{n} \tens{n}^\top)\tens[2]{T}(\nu \tens{u},p) \tens{n}=0$  on  $I\times \partial \Omega$ in \eqref{free-slip-equations}
by $\tens{u}= 0$ on  $I\times \partial \Omega$, one obtains the Stokes problem with \emph{no-slip} boundary conditions.
To analyze the corresponding FOSLS, in Section~\ref{sec:solution-space}
the space $\tens{\mathbb{H}}^1(\Omega)$ should be replaced by $\tens{H}^1_0(\Omega)$ and the condition $(\identity-\tens{n} \tens{n}^\top)\tens[2]{w}|_{I\times \partial \Omega}\tens{n}=0$ should be omitted from the definition of $\mathscr{Z}$,
and in Section~\ref{sec:div-free} the space $\tens{\mathcal H}^1(\Omega)=\{\tens{u} \in \tens{\mathbb{H}}^1(\Omega)\colon \divv \tens{u}=0\}$ 
should be replaced by $\{\tens{u} \in \tens{H}^1_0(\Omega)\colon \divv \tens{u}=0\}$.

The closure of both $\tens{\mathcal H}^1(\Omega)$ and that of $\{\tens{u} \in \tens{H}^1_0(\Omega)\colon \divv \tens{u}=0\}$ in $\tens{L}_2(\Omega)$
is given by $\tens{\mathcal H}^0(\Omega)=\{\tens{u} \in \tens{L}_2(\Omega) \colon \divv \tens{u} =0,\, \tens{u}\cdot\tens{n}=0 \text{ on } \partial\Omega\}$ (see~\cite{75.5}).
Since the $\tens{L}_2(\Omega)$-orthogonal projector $\tens{\Pi}$ onto $\tens{\mathcal H}^0(\Omega)$ does not preserve no-slip boundary conditions, and so $\tens{\Pi} \not\in \cL(\tens{H}^1_0(\Omega),\tens{H}^1_0(\Omega))$, Proposition~\ref{prop1} does not generalize to the no-slip case, and so well-posedness of the corresponding FOSLS in the no-slip case is \emph{not} guaranteed. Numerical results reported in Section~\ref{sec:numerics} seem to suggest that for these boundary conditions this FOSLS is indeed not well-posed.
\end{remark}

\begin{proposition} \label{propje}
Let $\Omega \subset \R^n$ be such that for any $q \in L_{2,0}(\Omega)$, the solution $u \in H^1(\Omega) \cap L_{2,0}(\Omega)$ of
$$
\int_\Omega \nabla u \cdot \nabla v \,d x=\int_\Omega q v \,d x \quad(v \in H^1(\Omega) \cap L_{2,0}(\Omega))
$$
is in $H^2(\Omega)$, with $\|u \|_{H^2(\Omega)} \lesssim \|q\|_{L_2(\Omega)}$.
Then 
$\tens{\Pi} \in \cL(\tens{\mathbb{H}}^1(\Omega),\tens{\mathbb{H}}^1(\Omega))$.
\end{proposition}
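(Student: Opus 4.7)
The plan is to invoke the Helmholtz decomposition and use the postulated $H^2$-regularity of the Neumann--Poisson problem to control the gradient part in $\tens{H}^1(\Omega)$. Given any $\tens{u} \in \tens{L}_2(\Omega)$, one has the orthogonal decomposition $\tens{u} = \tens{\Pi}\tens{u} + \nabla\phi$, where $\phi \in H^1(\Omega) \cap L_{2,0}(\Omega)$ is the unique (up to constants) solution of a Neumann-type problem. The idea is to show that when $\tens{u} \in \tens{\mathbb{H}}^1(\Omega)$, this $\phi$ actually lies in $H^2(\Omega)$ with $\|\phi\|_{H^2(\Omega)} \lesssim \|\tens{u}\|_{\tens{H}^1(\Omega)}$, so that $\nabla\phi \in \tens{H}^1(\Omega)$ and hence $\tens{\Pi}\tens{u} = \tens{u} - \nabla\phi \in \tens{H}^1(\Omega)$.

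First, for $\tens{u} \in \tens{\mathbb{H}}^1(\Omega)$, I would characterize the potential $\phi$ by noting that $\nabla\phi$ is the $\tens{L}_2(\Omega)$-orthogonal projection of $\tens{u}$ onto the gradient part of the Helmholtz decomposition. This means $\phi \in H^1(\Omega) \cap L_{2,0}(\Omega)$ satisfies
\[
\int_\Omega \nabla\phi \cdot \nabla v\,dx = \int_\Omega \tens{u} \cdot \nabla v\,dx \qquad (v \in H^1(\Omega) \cap L_{2,0}(\Omega)).
\]
Since $\tens{u}\cdot\tens{n}=0$ on $\partial\Omega$ and $\tens{u} \in \tens{H}^1(\Omega)$, integration by parts converts the right-hand side into $\int_\Omega q v\,dx$ with $q:=-\divv_x \tens{u}$. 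The compatibility condition $q \in L_{2,0}(\Omega)$ follows from the divergence theorem combined with $\tens{u}\cdot\tens{n}=0$, and clearly $\|q\|_{L_2(\Omega)} \leq \|\tens{u}\|_{\tens{H}^1(\Omega)}$.

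Applying the hypothesis, $\phi \in H^2(\Omega)$ with $\|\phi\|_{H^2(\Omega)} \lesssim \|q\|_{L_2(\Omega)} \lesssim \|\tens{u}\|_{\tens{H}^1(\Omega)}$. Therefore $\nabla\phi \in \tens{H}^1(\Omega)$ with $\|\nabla\phi\|_{\tens{H}^1(\Omega)} \lesssim \|\tens{u}\|_{\tens{H}^1(\Omega)}$, and consequently
\[
\tens{\Pi}\tens{u} = \tens{u} - \nabla\phi \in \tens{H}^1(\Omega), \qquad \|\tens{\Pi}\tens{u}\|_{\tens{H}^1(\Omega)} \lesssim \|\tens{u}\|_{\tens{H}^1(\Omega)}.
\]
It remains only to verify the boundary condition $\tens{\Pi}\tens{u}\cdot\tens{n}=0$ on $\partial\Omega$. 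Since $\tens{\Pi}\tens{u} \in \tens{\mathcal H}^0(\Omega)$, the normal trace vanishes as an element of $H^{-\frac12}(\partial\Omega)$; since $\tens{\Pi}\tens{u} \in \tens{H}^1(\Omega)$, the trace is well-defined in $H^{\frac12}(\partial\Omega)$, and the two must coincide, so the trace vanishes in $L_2(\partial\Omega)$. Hence $\tens{\Pi}\tens{u} \in \tens{\mathbb{H}}^1(\Omega)$, completing the proof.

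The main (mild) obstacle is to cast the Helmholtz potential $\phi$ in the exact variational form to which the $H^2$-regularity hypothesis applies, namely correctly pushing the $\tens{u}$-term on the right into $\int_\Omega q v\,dx$ via the no-penetration condition. Once that is done the argument is essentially a one-line chain of estimates; the rest is bookkeeping on how the two notions of normal trace agree for $\tens{H}^1$-functions in $\tens{\mathcal H}^0(\Omega)$.
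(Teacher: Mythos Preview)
Your proof is correct and follows essentially the same route as the paper: both arguments use the Helmholtz decomposition $\tens{L}_2(\Omega)=\tens{\mathcal H}^0(\Omega)\oplus^\perp \nabla(H^1(\Omega)\cap L_{2,0}(\Omega))$, identify the potential of $(\identity-\tens{\Pi})\tens{u}$ as the weak solution of the Neumann problem with datum $\pm\divv\tens{u}$ via integration by parts using $\tens{u}\cdot\tens{n}=0$, and then invoke the $H^2$-regularity hypothesis. You are slightly more explicit than the paper in checking that $\tens{\Pi}\tens{u}$ actually lands in $\tens{\mathbb{H}}^1(\Omega)$ (via agreement of the $H^{-1/2}$ and $H^{1/2}$ normal traces), which the paper leaves implicit.
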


\begin{proof} As shown in, e.g., \cite[Ch.~1, Thm. 1.4]{258} or \cite[Thm.~2.7]{75.5}, one has the following {\em Helmholtz decomposition}
\begin{equation} \label{Helm}
\tens{L}_2(\Omega)=\tens{\mathcal H}^0(\Omega) \oplus^{\perp} \nabla (H^1(\Omega) \cap L_{2,0}(\Omega)).
\end{equation}
Given $\tens{u} \in \tens{L}_2(\Omega)$, $\nabla z=(\identity-\tens{\Pi}) \tens{u}$ is the solution of $\int_\Omega (\tens{u}-\nabla z)\cdot \nabla v\,dx=0$ ($v \in H^1(\Omega) \cap L_{2,0}(\Omega)$).
When $\tens{u} \in \tens{\mathbb{H}}^1(\Omega)$, this $z$ solves $\int_\Omega \nabla z \cdot \nabla v \,dx=\int_\Omega \divv \tens{u} \,v\,dx $ ($v \in H^1(\Omega) \cap L_{2,0}(\Omega)$), and so thanks to our assumption on $\Omega$,
$$
\|\nabla z\|_{\tens{H}^1(\Omega)} \eqsim \|z\|_{H^2(\Omega)} \lesssim \|\divv \tens{u}\|_{\tens{L}_2(\Omega)} \lesssim \|\tens{u}\|_{\tens{H}^1(\Omega)},
$$
i.e., $\identity-\tens{\Pi}$, and thus $\tens{\Pi}$, is bounded on $\tens{\mathbb{H}}^1(\Omega)$.
\end{proof}

\begin{remarks}\label{rem:convex} The assumption on $\Omega$ made in Proposition~\ref{propje} is known to be satisfied when $\Omega$ is convex \cite[Thm.~3.2.1.3]{77}, or when it has a $C^2$-boundary \cite[Rem.~3.1.2.4]{77}. (The regularity results in \cite{77} are formulated for the Neumann problem $-\Delta u+\lambda u =q $ on $\Omega$, $\nabla u \cdot \tens{n}=0$ on $\partial\Omega$ for some $\lambda>0$, but because the difference with the Laplacian is an operator of lower order it is well-known that this difference is harmless).

Numerical results presented in Section~\ref{sec:numerics} indicate that our FOSLS is not (fully) well-posed in case of a domain with a re-entrant corner.
\end{remarks}

\subsection{Well-posedness of the FOSLS} \label{Swellposedness}
The following theorem is the main result of this work. 

\begin{theorem} \label{thm:main}
Let 
$\tens{\Pi} \in \cL(\tens{\mathbb{H}}^1(\Omega),\tens{\mathbb{H}}^1(\Omega))$. Then for the operator ${\bf G}$ defined by 
$$
{\bf G}(\tens{u},\tens[2]{w},p)=(\tens[2]{w}+\tens[2]{T}(\nu \tens{u},p),\tens{\partial}_t \tens{u}+\tens{\divv}_x \tens[2]{w},\divv_x \tens{u},\tens{u}(0,\cdot)),
$$
it holds that
$$
\|{\bf G}(\tens{u},\tens[2]{w},p)\|_{\tens[2]{L}_2(I \times \Omega) \times \tens{L}_2(I \times \Omega)\times H^1(I;L_{2,0}(\Omega)) \times \tens{L}_2(\Omega)} \eqsim \|(\tens{u},\tens[2]{w},p)\|_{\mathscr{Z} \times L_2(I;L_{2,0}(\Omega))}
$$
for all $(\tens{u},\tens[2]{w},p) \in \mathscr{Z} \times L_2(I;L_{2,0}(\Omega))$.
\end{theorem}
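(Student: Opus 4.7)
The easy direction $\|\mathbf G(\tens u,\tens[2]{w},p)\|\lesssim\|(\tens u,\tens[2]{w},p)\|_{\mathscr Z\times L_2(I;L_{2,0}(\Omega))}$ follows componentwise from the definition of the graph norm on $\mathscr Z$; for the initial trace, Lemma~\ref{lem2} together with the Gelfand triple $\tens{\mathbb H}^1(\Omega)\hookrightarrow\tens L_2(\Omega)\hookrightarrow\tens{\mathbb H}^1(\Omega)'$ supplies the continuous embedding $\mathscr Z\hookrightarrow C(\bar I;\tens L_2(\Omega))$.

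The substantial direction is the lower bound \eqref{17}. The plan is to follow the heuristic spelled out before Proposition~\ref{prop1}, after first reducing to the divergence-free case by a controlled lift of $g$. Write $(A,\tens F,g,\tens u_0):=\mathbf G(\tens u,\tens[2]{w},p)$. The standard LBB inf-sup condition for $\divv_x\colon\tens{\mathbb H}^1(\Omega)\to L_{2,0}(\Omega)$, valid on any bounded Lipschitz domain, furnishes a bounded linear right-inverse $\tens B\colon L_{2,0}(\Omega)\to\tens{\mathbb H}^1(\Omega)$. Setting $\tens u_g:=\tens B g$ and using linearity and time-independence of $\tens B$, we obtain
$$\|\tens u_g\|_{L_2(I;\tens{\mathbb H}^1(\Omega))}+\|\partial_t\tens u_g\|_{L_2(I;\tens{\mathbb H}^1(\Omega))}+\|\tens u_g(0,\cdot)\|_{\tens L_2(\Omega)}\lesssim\|g\|_{H^1(I;L_{2,0}(\Omega))},$$
with $\divv_x\tens u_g=g$. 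Hence $\tilde{\tens u}:=\tens u-\tens u_g\in L_2(I;\tens{\mathcal H}^1(\Omega))$ is divergence-free, and all lift-dependent contributions will be absorbed into the right-hand side of the target bound.

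For the core step, integration by parts against an arbitrary $\tens v\in L_2(I;\tens{\mathbb H}^1(\Omega))$, using the symmetry of $\tens[2]{w}$ and the boundary condition $(\identity-\tens n\tens n^\top)\tens[2]{w}\tens n=0$ encoded in $\mathscr Z$, yields
$$\int_I\!\!\int_\Omega A:\tens[2]{\nabla}_x\tens v\,dx\,dt+\int_I\!\!\int_\Omega \tens F\cdot\tens v\,dx\,dt=\int_I\!\!\int_\Omega\Bigl(\partial_t\tens u\cdot\tens v+\tfrac{\nu}{2}\tens[2]{D}(\tens u):\tens[2]{D}(\tens v)-p\,\divv_x\tens v\Bigr)dx\,dt.$$
Calling the left-hand side $\tens f(\tens v)$, one has $\|\tens f\|_{L_2(I;\tens{\mathbb H}^1(\Omega)')}\leq\|A\|_{\tens[2]{L}_2(I\times\Omega)}+\|\tens F\|_{\tens L_2(I\times\Omega)}$. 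Restricting to $\tens v\in L_2(I;\tens{\mathcal H}^1(\Omega))$ eliminates the pressure term, and substituting $\tens u=\tilde{\tens u}+\tens u_g$ casts $\tilde{\tens u}$ as the solution of the abstract parabolic equation of Theorem~\ref{thm1} with bilinear form $a(t;\tens\mu,\tens\lambda)=\tfrac{\nu}{2}\int_\Omega\tens[2]{D}(\tens\mu):\tens[2]{D}(\tens\lambda)\,dx$ on $\tens{\mathcal H}^1(\Omega)$. Boundedness of $a$ is immediate and the G\aumlaut{}rding inequality is Korn's second inequality on $\tens{\mathbb H}^1(\Omega)$. Theorem~\ref{thm1} then controls $\|\tilde{\tens u}\|_{L_2(I;\tens{\mathcal H}^1(\Omega))}+\|\partial_t\tilde{\tens u}\|_{L_2(I;\tens{\mathcal H}^1(\Omega)')}$ by the data; Proposition~\ref{prop1}---which is precisely where the standing hypothesis $\tens\Pi\in\cL(\tens{\mathbb H}^1(\Omega),\tens{\mathbb H}^1(\Omega))$ enters the argument---upgrades the time-derivative bound to the norm of $L_2(I;\tens{\mathbb H}^1(\Omega)')$. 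Together with the lift bound this yields the required control of $\|\tens u\|_{L_2(I;\tens{\mathbb H}^1(\Omega))}$ and $\|\partial_t\tens u\|_{L_2(I;\tens{\mathbb H}^1(\Omega)')}$.

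The pressure is then recovered by testing the identity above against general $\tens v\in L_2(I;\tens{\mathbb H}^1(\Omega))$ and invoking the LBB inf-sup once more, which yields $\|p\|_{L_2(I;L_{2,0}(\Omega))}\lesssim\|\tens f\|_{L_2(I;\tens{\mathbb H}^1(\Omega)')}+\|\partial_t\tens u\|_{L_2(I;\tens{\mathbb H}^1(\Omega)')}+\|\tens u\|_{L_2(I;\tens{\mathbb H}^1(\Omega))}$; the stress tensor follows by triangle inequality from $\tens[2]{w}=A-\tens[2]{T}(\nu\tens u,p)$. The remaining two components of the $\mathscr Z$-norm, $\|\partial_t\tens u+\tens{\divv}_x\tens[2]{w}\|_{\tens L_2(I\times\Omega)}$ and $\|\divv_x\tens u\|_{H^1(I;L_{2,0}(\Omega))}$, coincide with data-norm components on the nose. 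The key conceptual difficulty---and the reason for the hypothesis on $\tens\Pi$---is the passage, via Proposition~\ref{prop1}, from the natural parabolic duality on $\tens{\mathcal H}^1(\Omega)$ to the pressure-sensitive duality on $\tens{\mathbb H}^1(\Omega)$; everything else is careful bookkeeping around a standard parabolic + LBB argument.
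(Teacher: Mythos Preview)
Your argument is correct and follows essentially the same route as the paper's proof: lift $\divv_x\tens u$ by a bounded right-inverse of the divergence, apply Theorem~\ref{thm1} to the resulting divergence-free part using Korn's inequality for the G{\aa}rding estimate, upgrade the dual norm via Proposition~\ref{prop1}, and recover $p$ through the inf-sup \eqref{eeninfsup}. Two minor organizational differences: the paper chooses the specific right-inverse $\divv^+\in\cL(L_{2,0}(\Omega),\tens H^1_0(\Omega))$ coming from the stationary Stokes problem, whereas you invoke an abstract LBB right-inverse into $\tens{\mathbb H}^1(\Omega)$; and the paper defines the test functional $\tens f$ in terms of the lifted variable $\tens z$ and then splits it to reach the data, while you define $\tens f(\tens v)=\int A{:}\tens[2]\nabla_x\tens v+\int\tens F\cdot\tens v$ directly from the data, making the bound $\|\tens f\|\le\|A\|+\|\tens F\|$ immediate. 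The one place where the paper is more explicit is the justification of the integration-by-parts identity \eqref{4} for general $(\tens u,\tens[2]{w})\in\mathscr Z$, which relies on the density statement of Remark~\ref{rem:density}; you invoke this step correctly but without flagging the density argument.
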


\begin{proof}
The boundedness of ${\bf G}$ follows from the definition of $\mathscr{Z}$, specifically concerning the term $\|\tens{u}(0,\cdot)\|_{L_2(\Omega)}$ from
\be \label{25}
 \|\tens{u}(0,\cdot)\|_{\tens{L}_2(\Omega)} \lesssim \|\tens{\partial}_t \tens{u}\|_{L_2(I;\tens{\mathbb{H}}^1(\Omega)')}+
 \|\tens{u}\|_{L_2(I;\tens{\mathbb{H}}^1(\Omega))} \lesssim \|(\tens{u},\tens[2]{w})\|_\mathscr{Z}.
\ee
Here the first inequality follows from \cite[Ch.~XVIII, \S1, Thm.~1]{63} using that $\tens{\mathbb{H}}^1(\Omega) \hookrightarrow \tens{L}_2(\Omega)$ with dense embedding, and the second inequality follows from Lemma~\ref{lem2}.
The other direction, i.e.~\eqref{17}, amounts to showing that
 for $(\tens{u},\tens[2]{w},p) \in \mathscr{Z} \times L_2(I;L_{2,0}(\Omega))$,
\be \label{5}
\|\tens{u}\|_{L_2(I;\tens{\mathbb{H}}^1(\Omega))}+\|\tens[2]{w}\|_{\tens[2]{L}_2(I \times \Omega)}+\|p\|_{L_2(I;L_{2,0}(\Omega))} \lesssim \|{\bf G}(\tens{u},\tens[2]{w},p)\|,
\ee
where $\|{\bf G}(\tens{u},\tens[2]{w},p)\|$ will be used as shorthand notation for the norm of ${\bf G}(\tens{u},\tens[2]{w},p)$ from the statement.

Given $(\tens{u},\tens[2]{w},p) \in \mathscr{Z} \times L_2(I;L_{2,0}(\Omega))$ we start with finding a `lift' of $\divv_x \tens{u}$.
There exists an operator $\divv^+ \in \cL(L_{2,0}(\Omega),\tens{H}^1_0(\Omega))$ with $\divv \divv^+=\identity$. An example of such a right-inverse of $\divv$ is given by the mapping $h \mapsto \tens{v}$ where $(\tens{v},q)$ solves the stationary Stokes problem
$$
\left\{
\begin{array}{rcll}
-\tens{\Delta}\tens{v} +\tens{\nabla} q& = & 0 & \text{ on }  \Omega,\\
\divv \tens{v}& = &h & \text{ on } \Omega,\\
\tens{v} & = & \tens{0} & \text{ on } \partial\Omega.
\end{array}
\right.
$$
Setting $\tens{\ell}:=\divv^+_x \divv_x \tens{u}$, we have
\be \label{2}
\begin{split}
\|\tens{\ell}\|_{L_2(I;\tens{\mathbb{H}}^1(\Omega))}+\|\tens{\ell}\|_{H^1(I;\tens{L}_2(\Omega))} & \lesssim
\|\tens{\ell}\|_{H^1(I;\tens{H}^1_0(\Omega))}\\ 
& \lesssim \|\divv_x \tens{u}\|_{H^1(I;L_{2,0}(\Omega))} \leq \|{\bf G}(\tens{u},\tens[2]{w},p)\|,
\end{split}
\ee
and so, again using the first inequality from \eqref{25}, also $\|\tens{\ell}(0,\cdot)\|_{\tens{L}_2(\Omega)} \lesssim  \|{\bf G}(\tens{u},\tens[2]{w},p)\|$.

We set $\tens{z}:=\tens{u}-\tens{\ell}$, which is in $L_2(I;\tens{\mathcal H}^1(\Omega)) \cap H^1(I;\tens{\mathbb{H}}^1(\Omega)')$ by \eqref{2} and Lemma~\ref{lem2}, and, for $\tens{v} \in L_2(I;\tens{\mathbb{H}}^1(\Omega))$, set
$$
\tens{f}(\tens{v}):=\int_I \int_\Omega \tens{\partial}_t \tens{z} \cdot \tens{v}+\tfrac12 \tens[2]{T}(\nu \tens{z},p):\tens[2]{D}(\tens{v})\,dx\,dt.
$$
The boundedness of $\gamma_0$ provided by Theorem~\ref{thm1} shows that $\tens{z}(0,\cdot) \in \tens{\mathcal H}^0(\Omega)$.
Below we will show that
\begin{align} \label{8}
&\|\tens{f}\|_{L_2(I;\tens{\mathbb{H}}^1(\Omega)')} \lesssim \|{\bf G}(\tens{u},\tens[2]{w},p)\|, \\ \label{9}
&\|\tens{z}\|_{L_2(I;\tens{\mathbb{H}}^1(\Omega))}+\|p\|_{L_2(I;L_{2,0}(\Omega))} \lesssim \|\tens{f}\|_{L_2(I;\tens{\mathbb{H}}^1(\Omega)')} + \|\tens{z}(0,\cdot)\|_{\tens{L}_2(\Omega)},
\end{align}
which together with
\begin{align*}
\|\tens{z}(0,\cdot)\|_{\tens{L}_2(\Omega)} &\leq \|\tens{u}(0,\cdot)\|_{\tens{L}_2(\Omega)} + \|\tens{\ell}(0,\cdot)\|_{\tens{L}_2(\Omega)} \lesssim \|{\bf G}(\tens{u},\tens[2]{w},p)\|,\\
\|\tens{u}\|_{L_2(I;\tens{\mathbb{H}}^1(\Omega))}&\leq \|\tens{z}\|_{L_2(I;\tens{\mathbb{H}}^1(\Omega))}+\|\tens{\ell}\|_{L_2(I;\tens{\mathbb{H}}^1(\Omega))}
\lesssim  \|\tens{z}\|_{L_2(I;\tens{\mathbb{H}}^1(\Omega))}+ \|{\bf G}(\tens{u},\tens[2]{w},p)\|,\\
\|\tens[2]{w}\|_{\tens[2]{L}_2(I \times \Omega)} & \leq \|\tens[2]{w}+\tens[2]{T}(\nu \tens{u},p)\|_{\tens[2]{L}_2(I \times \Omega)}+\|\tens[2]{T}(\nu \tens{u},p)\|_{\tens[2]{L}_2(I \times \Omega)}\\
& \lesssim  \|{\bf G}(\tens{u},\tens[2]{w},p)\|+\|\tens{u}\|_{L_2(I;\tens{\mathbb{H}}^1(\Omega))}+\|p\|_{L_2(I;L_{2,0}(\Omega))}
\end{align*}
completes the proof of \eqref{5}, and so of the theorem.

Splitting
$$
\tens{f}(\tens{v})
=\underbrace{\int_I \int_\Omega \tens{\partial}_t \tens{u} \cdot \tens{v}+\tfrac12 \tens[2]{T}(\tens{\nu u},p):\tens[2]{D}(\tens{v})\,dx\,dt
}_{\tens{f}^{(\tens{u},p)}(\tens{v}):=}-
\underbrace{\int_I \int_\Omega \tens{\partial}_t \tens{\ell} \cdot \tens{v}+\tfrac\nu2 \tens[2]{D}(\tens{\ell}):\tens[2]{D}(\tens{v})\,dx\,dt}_{\tens{f}^{\tens{\ell}}(\tens{v}):=},
$$
it holds that $\|\tens{f}^{\tens{\ell}}\|_{L_2(I;\tens{\mathbb{H}}^1(\Omega)')} \lesssim  \|{\bf G}(\tens{u},\tens[2]{w},p)\|$ by \eqref{2}.
Writing 
$$
\tens{f}^{(\tens{u},p)}(\tens{v})=
\int_I \int_\Omega \tens{\partial}_t \tens{u} \cdot \tens{v}-\tfrac12 \tens[2]{w}:\tens[2]{D}(\tens{v})\,dx\,dt+
\int_I \int_\Omega \tfrac12 (\tens[2]{w}+\tens[2]{T}(\nu \tens{u},p)):\tens[2]{D}(\tens{v})\,dx\,dt.
$$
the second term is bounded by some multiple of $\|\tens[2]{w}+\tens[2]{T}(\nu \tens{u},p)\|_{\tens[2]{L}_2(I \times \Omega)} \|\tens{v}\|_{L_2(I;\tens{\mathbb{H}}^1(\Omega))}$.
By symmetry of $\tens[2]{w}$ the first term is equal to
$\int_I \int_\Omega \tens{\partial}_t \tens{u} \cdot \tens{v}- \tens[2]{w}:\tens[2]{\nabla}_x \tens{v}\,dx\,dt$.
For $(\tens{u},\tens[2]{w}) \in \mathcal{Z}_2$, we have
\begin{align*}
\big|\int_I \int_\Omega \tens{\partial}_t \tens{u} \cdot \tens{v}- \tens[2]{w}:\tens[2]{\nabla}_x \tens{v}\,dx\,dt\big|
&\stackrel{\eqref{4}}{=}
\big|\int_I \int_\Omega (\tens{\partial}_t \tens{u} +\tens{\divv}_x \tens[2]{w}) \cdot \tens{v}\,dx\,dt\big|
\\ &\,\, \lesssim \|\tens{\partial}_t \tens{u} +\tens{\divv}_x \tens[2]{w}\|_{\tens{L}_2(I \times \Omega)} \|\tens{v}\|_{L_2(I;\tens{\mathbb{H}}^1(\Omega))}.
\end{align*}
which extends by density to $(\tens{u},\tens[2]{w}) \in \mathscr{Z}_2$, and thus to $(\tens{u},\tens[2]{w}) \in \mathscr{Z}$, which concludes the proof of  \eqref{8}.

For $\tens{v} \in L_2(I;\tens{\mathcal H}^1(\Omega))$, we have
\mbox{$\tfrac12\int_I \int_\Omega p\identity :\tens[2]{D}(\tens{v})\,dx\,dt=\int_I \int_\Omega p \divv_x \tens{v}\,dx\,dt=0$,} and so
$$
\tens{f}(\tens{v})=\int_I \int_\Omega \tens{\partial}_t \tens{z} \cdot \tens{v}+\tfrac{\nu}{2} \tens[2]{D}(\tens{z}):  \tens[2]{D}(\tens{v}) \,dx\,dt.
$$
Korn's second inequality,  which is valid on Lipschitz domains (\cite{239.05}), shows that there exists a constant $\varrho$ such that
\begin{equation}\label{eq:korn}
 \int_\Omega \tens[2]{D}(\tens{v}):  \tens[2]{D}(\tens{v}) \,dx + \varrho \|\tens{v}\|^2_{\tens{L}_2(\Omega)}
\gtrsim \|\tens{v}\|_{\tens{H}^1(\Omega)}^2 \quad(\tens v \in \tens{H}^1(\Omega)).
\end{equation}
Since besides this G{\aa}rding inequality also $\tens z(0,\cdot) \in \tens{\mathcal H}^0(\Omega)$, from an application of Theorem~\ref{thm1} we conclude that
\be \label{10}
\|\tens{z}\|_{L_2(I;\tens{\mathcal H}^1(\Omega))}+\|\tens{\partial}_t \tens{z}\|_{L_2(I;\tens{\mathcal H}^1(\Omega)')}
\lesssim \|f\|_{L_2(I;\tens{\mathcal H}^1(\Omega)')}+\|z(0,\cdot)\|_{L_2(\Omega)},
\ee
where, thanks to the assumption of $\tens{\Pi} \in \cL(\tens{\mathbb{H}}^1(\Omega),\tens{\mathbb{H}}^1(\Omega))$, 
 an application of Proposition~\ref{prop1} shows that
\be \label{11}
 \|\tens{\partial}_t \tens{z}\|_{L_2(I;\tens{\mathbb{H}}^1(\Omega)')} \eqsim \|\tens{\partial}_t \tens{z}\|_{L_2(I;\tens{\mathcal H}^1(\Omega)')}.
\ee

By definition of $\tens{f}$, for $\tens{v} \in L_2(I;\tens{\mathbb{H}}^1(\Omega))$ we have
$$
\int_I \int_\Omega p \divv_x\tens{v} \,dx\,dt=\int_I \int_\Omega \tens{\partial}_t \tens{z}\cdot \tens{v} +\tfrac{\nu}{2} \tens[2]{D}(\tens{z}): \tens[2]{D}(\tens{v})\,dx\,dt-\tens{f}(\tens{v}).
$$
On any Lipschitz domain,
\be \label{eeninfsup}
\inf_{0 \neq q \in L_2(I;L_{2,0}(\Omega))} \sup_{0 \neq \tens{v} \in L_2(I;\tens{\mathbb{H}}^1(\Omega))}
\frac{|\int_I \int_\Omega q \divv_x\tens{v} \,dx\,dt|}{\|q\|_{ L_2(I;L_{2,0}(\Omega))} \|\tens{v}\|_{L_2(I;\tens{\mathbb{H}}^1(\Omega))}} >0,
\ee
being even true with $\tens{\mathbb{H}}^1(\Omega)$ reading as $\tens{H}^1_0(\Omega)$ (e.g. \cite[Ch.~1, Rem.~1.4(ii)]{258}).
Consequently, we have
\begin{align*}
\|p\|_{L_2(I;L_{2,0}(\Omega))} &\lesssim \sup_{0 \neq \tens{v} \in L_2(I;\tens{\mathbb{H}}^1(\Omega))}
\frac{|\int_I \int_\Omega \tens{\partial}_t \tens{z}\cdot \tens{v} +\tfrac12 \tens[2]{D}(\tens{z}): \tens[2]{D}(\tens{v})\,dx\,dt-\tens{f}(\tens{v})|}{\ \|\tens{v}\|_{L_2(I;\tens{\mathbb{H}}^1(\Omega))}}\\
& \lesssim \| \tens{\partial}_t \tens{z}\|_{L_2(I;\tens{\mathbb{H}}^1(\Omega)')}+\|\tens{z}\|_{L_2(I;\tens{\mathbb{H}}^1(\Omega))}+
\|f\|_{L_2(I;\tens{\mathbb{H}}^1(\Omega)')},
\end{align*}
which together with \eqref{10} and \eqref{11} completes the proof of \eqref{9}.
\end{proof}

\begin{remark} \label{rem:dtdiv}
It would be preferable when in the least-squares functional, the component $\divv_x \tens{u}$ is measured in $L_2(I;L_{2,0}(\Omega))$ instead of in $H^1(I;L_{2,0}(\Omega))$, in which case also the condition $\divv_x \tens{u} \in H^1(I;L_{2,0}(\Omega))$ in the definition of solution space could be removed.
Numerical results presented in Section~\ref{sec:numerics} indicate, however, that the corresponding FOSLS is \emph{not} well-posed.

We will apply the FOSLS in the most relevant case that $g=0$, i.e., $\divv_x \tens{u}=0$. To ensure that the measurement in $H^1(I;L_{2,0}(\Omega))$ of the error in $\divv_x \tens{u}$ does \emph{not} increase the smoothness conditions on $\tens{u}$ to achieve a certain convergence rate,  we will apply a trial space that has a commuting diagram property. 
In particular, it comes with a quasi-interpolator into that space that 
maps spatially divergence-free functions to spatially divergence-free functions (cf. (proof of) Proposition~\ref{prop:velo} and Remark~\ref{rem2}).
\end{remark}

\subsection{A slightly modified FOSLS}
A problem with our FOSLS is the pressure space $L_2(I;L_{2,0}(\Omega))$.
A natural approximation space for the pressure is a space of piecewise polynomials w.r.t.~some partition of $I \times \Omega$.
Unless this partition can be decomposed into time-slabs, i.e.~strips of type $[t_i,t_{i+1}] \times \Omega$, it is not clear how to impose the constraint $\int_\Omega p(t,x)\,dx=0$ for all $t \in I$ on such piecewise polynomials. To circumvent this problem, we add this constraint to our least-squares functional.

With
\be \label{defM}
(Mp)(t):=\tfrac{1}{\sqrt{|\Omega|}}\int_\Omega p(t,x)\,dx,
\ee
and $(Ev)(t,x):=\tfrac{1}{\sqrt{|\Omega|}} v(t)$, we have $\|M\|_{\cL(L_2(I \times \Omega),L_2(I))}=1=\|E\|_{\cL(L_2(I),L_2(I \times \Omega))}$ and $\ran (\identity - E M) \subset L_2(I;L_{2,0}(\Omega))$.
After extending the operator ${\bf G}$, which was defined on $\mathscr{Z} \times L_2(I;L_{2,0}(\Omega))$, to an operator on $\mathscr{Z} \times L_2(I \times \Omega)$, we define
$$
{\bf \bar{G}}(\tens{u},\tens[2]{w},p):=({\bf G}(\tens{u},\tens[2]{w},p),Mp).
$$

\begin{corollary} \label{corolbarG}
Let 
$\tens{\Pi} \in \cL(\tens{\mathbb{H}}^1(\Omega),\tens{\mathbb{H}}^1(\Omega))$. Then with
$$
\mathscr{F}:=\tens[2]{L}_2(I \times \Omega) \times \tens{L}_2(I \times \Omega)\times H^1(I;L_{2,0}(\Omega)) \times \tens{L}_2(\Omega) \times L_2(I),
$$
it holds that
$$
\|{\bf \bar{G}}(\tens{u},\tens[2]{w},p)\|_{\mathscr{F}} \eqsim \|(\tens{u},\tens[2]{w},p)\|_{\mathscr{Z} \times L_2(I\times \Omega)}
$$
for all $(\tens{u},\tens[2]{w},p) \in \mathscr{Z} \times L_2(I\times \Omega)$.
\end{corollary}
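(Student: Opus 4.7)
My plan is to reduce the statement directly to Theorem~\ref{thm:main} by orthogonally splitting the pressure in $L_2(I\times\Omega)$ into its spatially mean-free part and its spatial average (which depends only on $t$), and then absorbing the latter into the stress tensor. Given $(\tens{u},\tens[2]{w},p) \in \mathscr{Z} \times L_2(I\times\Omega)$, I would set $q := (\identity - EM)p$ and $r := EMp$, so that $q \in L_2(I;L_{2,0}(\Omega))$ by the range property noted after \eqref{defM}, and then introduce the modified stress tensor $\tens[2]{w}' := \tens[2]{w} - r\identity$.

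The key observation is that $r = r(t)$ is independent of $x$, so $\tens{\divv}_x(r\identity) = \tens{\nabla}_x r = 0$; therefore $\tens{\partial}_t \tens{u} + \tens{\divv}_x \tens[2]{w}' = \tens{\partial}_t \tens{u} + \tens{\divv}_x \tens[2]{w}$, and $\tens[2]{w}'$ inherits from $\tens[2]{w}$ both its symmetry and the boundary condition $(\identity - \tens{n}\tens{n}^\top)\tens[2]{w}'\tens{n} = 0$ on $I \times \partial\Omega$ (since $(\identity - \tens{n}\tens{n}^\top)\tens{n} = 0$ when $|\tens{n}| = 1$). Moreover $\tens[2]{w}' + \tens[2]{T}(\nu\tens{u},q) = \tens[2]{w} + \tens[2]{T}(\nu\tens{u},p)$. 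Hence $(\tens{u},\tens[2]{w}',q) \in \mathscr{Z} \times L_2(I;L_{2,0}(\Omega))$ and ${\bf G}(\tens{u},\tens[2]{w}',q) = {\bf G}(\tens{u},\tens[2]{w},p)$, so Theorem~\ref{thm:main} applied to this modified triple gives
$$\|(\tens{u},\tens[2]{w}',q)\|_{\mathscr{Z}\times L_2(I;L_{2,0}(\Omega))} \eqsim \|{\bf G}(\tens{u},\tens[2]{w},p)\|.$$

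To finish, I would use the identity $\|r\identity\|_{\tens[2]{L}_2(I\times\Omega)} \eqsim \|r\|_{L_2(I\times\Omega)} = \|EMp\|_{L_2(I\times\Omega)} = \|Mp\|_{L_2(I)}$ (the last equality because $\|E\|=1$) together with the Pythagorean identity $\|p\|_{L_2(I\times\Omega)}^2 = \|q\|_{L_2(I;L_{2,0}(\Omega))}^2 + \|Mp\|_{L_2(I)}^2$, which holds because $q(t,\cdot) \in L_{2,0}(\Omega)$ is $L_2(\Omega)$-orthogonal to the constant $r(t,\cdot)$ for a.e.\ $t \in I$. Combined with the trivial bound $\|Mp\|_{L_2(I)} \leq \|p\|_{L_2(I\times\Omega)}$ and the triangle inequality applied to $\tens[2]{w} = \tens[2]{w}' + r\identity$ in $\tens[2]{L}_2(I\times\Omega)$, these ingredients translate the $(\tens{u},\tens[2]{w}',q)$-equivalence into the $(\tens{u},\tens[2]{w},p)$-equivalence of the claim, in both directions. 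The argument is purely algebraic once Theorem~\ref{thm:main} is in hand, so I do not expect any substantive obstacle.
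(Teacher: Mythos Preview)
Your proof is correct and follows essentially the same strategy as the paper: split $p$ into its spatially mean-free part $q=(\identity-EM)p$ and its spatial average $r=EMp$, apply Theorem~\ref{thm:main} to the mean-free part, and control the remainder by $\|Mp\|_{L_2(I)}$. The only (cosmetic) difference is that the paper applies Theorem~\ref{thm:main} to the triple $(\tens{u},\tens[2]{w},q)$ and then bounds the extra term $\|{\bf G}(0,0,r)\|\lesssim\|Mp\|_{L_2(I)}$ via the triangle inequality, whereas you absorb $r$ into the stress tensor via $\tens[2]{w}'=\tens[2]{w}-r\identity$ so as to obtain the exact identity ${\bf G}(\tens{u},\tens[2]{w}',q)={\bf G}(\tens{u},\tens[2]{w},p)$.
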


\begin{proof} 
The boundedness of ${\bf \bar{G}}$ follows from that of ${\bf G}$, which in Theorem~\ref{thm:main} was demonstrated for its domain being $\mathscr{Z} \times L_2(I;L_{2,0}(\Omega))$ but which immediately extends to the larger domain $\mathscr{Z} \times L_2(I \times \Omega)$, together with $\|M\|_{\cL(L_2(I \times \Omega),L_2(I))}=1$.

The other direction follows from
\begin{align*}
\|(\tens{u},\tens[2]{w},p)\|_{\mathscr{Z} \times L_2(I\times \Omega)} &\lesssim \|(\tens{u},\tens[2]{w})\|_{\mathscr{Z}}+\|(\identity - E M)p\|_{L_2(I;L_{2,0}(\Omega))}+\|Mp\|_{L_2(I)}\\
& \stackrel{\text{\makebox[0pt]{Thm.~\ref{thm:main}}}}{\lesssim}\|{\bf G}(\tens{u},\tens[2]{w},(\identity - E M)p)\|+\|Mp\|_{L_2(I)} \\
& \leq \|{\bf G}(\tens{u},\tens[2]{w},p)\|+\|{\bf G}(0,0,E M p)\|+\|Mp\|_{L_2(I)}\\
&\lesssim \|{\bf G}(\tens{u},\tens[2]{w},p)\|+\|E M p\|_{L_2(I \times \Omega)}+\|Mp\|_{L_2(I)}\\
&\lesssim \|{\bf G}(\tens{u},\tens[2]{w},p)\|+\|Mp\|_{L_2(I)} \lesssim \|{\bf \bar{G}}(\tens{u},\tens[2]{w},p)\|_{\mathscr{F}},
\end{align*}
where $\|\cdot\|$ abbreviates again the norm of Theorem~\ref{thm:main}.
\end{proof}

Above corollary can be formulated by saying that ${\bf \bar{G}}$ is a boundedly invertible linear mapping from $\mathscr{Z} \times L_2(I\times \Omega)$ onto its range in $\mathscr{F}$. Restricting to the \emph{most relevant case} of $\divv_x \tens{u}=0$ on $I \times \Omega$, i.e.,
 $$
 g=0 \text{ in } \eqref{free-slip-equations},
 $$
in order to know for which right-hand sides there exists an exact solution of the first order system reformulation of \eqref{free-slip-equations},
$$
{\bf \bar{G}}(\tens{u},\tens[2]{w},p)
=
(\tens[2]{w}+\tens[2]{T}(\nu\tens{u},p), \tens{\partial}_t \tens{u} +\tens{\divv}_x \tens[2]{w},\divv_x \tens{u},\tens{u}(0,\cdot),Mp)=(0,\tens{f},0,\tens{u}_0,0)
$$
we study $\ran {\bf \bar{G}}|_{\{(\tens{u},\tens[2]{w},p) \in \mathscr{Z} \times L_2(I\times\Omega)  \colon \tens[2]{w}+\tens[2]{T}(\nu\tens{u},p)=0,\,\divv_x \tens{u}=0,\,Mp=0\}}$.

\begin{proposition} \label{proprange} Let 
$\tens{\Pi} \in \cL(\tens{\mathbb{H}}^1(\Omega),\tens{\mathbb{H}}^1(\Omega))$. Then
\begin{align*}
\ran {\bf \bar{G}}&|_{\{(\tens{u},\tens[2]{w},p) \in \mathscr{Z} \times L_2(I\times\Omega)  \colon  \tens[2]{w}+\tens[2]{T}(\nu\tens{u},p)=0,\,\divv_x \tens{u}=0,\,Mp=0\}}\\&\hspace*{5em}=
\{0\} \times \tens{L}_2(I \times \Omega) \times \{0\} \times \tens{\mathcal H}^0(\Omega) \times \{0\}.
\end{align*}
\end{proposition}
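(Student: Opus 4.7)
The plan is to prove the two set inclusions separately. The inclusion ``$\subseteq$'' is direct: under the three side constraints the first, third, and fifth components of ${\bf \bar{G}}(\tens{u},\tens[2]{w},p)$ vanish, while the second lies in $\tens{L}_2(I\times\Omega)$ by definition of $\mathscr{Z}$. For the fourth slot, Lemma~\ref{lem2} gives $\tens{\partial}_t\tens{u} \in L_2(I;\tens{\mathbb{H}}^1(\Omega)')$, hence $\tens{u} \in C(\bar I;\tens{L}_2(\Omega))$ via the standard Gelfand-triple trace result already invoked in \eqref{25}. Since $\divv_x\tens{u}=0$ and $\tens{u}(t,\cdot) \in \tens{\mathbb{H}}^1(\Omega)$ for a.e.~$t$, one has $\tens{u}(t,\cdot) \in \tens{\mathcal H}^1(\Omega) \subset \tens{\mathcal H}^0(\Omega)$ for a.e.~$t$; closedness of $\tens{\mathcal H}^0(\Omega)$ in $\tens{L}_2(\Omega)$ then transfers this to $t=0$.

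For the opposite inclusion I fix $(\tens{f},\tens{u}_0) \in \tens{L}_2(I\times\Omega) \times \tens{\mathcal H}^0(\Omega)$ and construct a triple $(\tens{u},\tens[2]{w},p)$. I apply Theorem~\ref{thm1} on $\tens{\mathcal H}^1(\Omega)$ to the bilinear form $a(t;\tens{\mu},\tens{\lambda}) := \tfrac{\nu}{2}\int_\Omega \tens[2]{D}(\tens{\mu}):\tens[2]{D}(\tens{\lambda})\,dx$, whose boundedness is immediate and whose G{\aa}rding inequality follows from Korn's inequality \eqref{eq:korn}. Since $\tens{v}\mapsto \int_I\int_\Omega \tens{f}\cdot\tens{v}\,dx\,dt$ lies in $L_2(I;\tens{\mathcal H}^1(\Omega))'$, Theorem~\ref{thm1} yields a unique $\tens{u} \in L_2(I;\tens{\mathcal H}^1(\Omega)) \cap H^1(I;\tens{\mathcal H}^1(\Omega)')$ with $\tens{u}(0,\cdot)=\tens{u}_0$ and
\begin{equation*}
\int_I\!\int_\Omega \tens{\partial}_t\tens{u}\cdot\tens{v} + \tfrac{\nu}{2}\tens[2]{D}(\tens{u}):\tens[2]{D}(\tens{v})\,dx\,dt = \int_I\!\int_\Omega \tens{f}\cdot\tens{v}\,dx\,dt \qquad (\tens{v}\in L_2(I;\tens{\mathcal H}^1(\Omega))).
\end{equation*}
Proposition~\ref{prop1} then extends $\tens{\partial}_t\tens{u}$ to $L_2(I;\tens{\mathbb{H}}^1(\Omega)')$, so the bounded functional $\Lambda(\tens{v}) := \int_I\!\int_\Omega\tens{\partial}_t\tens{u}\cdot\tens{v} + \tfrac{\nu}{2}\tens[2]{D}(\tens{u}):\tens[2]{D}(\tens{v}) - \tens{f}\cdot\tens{v}\,dx\,dt$ on $L_2(I;\tens{\mathbb{H}}^1(\Omega))$ vanishes on $L_2(I;\tens{\mathcal H}^1(\Omega))$. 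The inf-sup condition \eqref{eeninfsup} now produces a unique $p \in L_2(I;L_{2,0}(\Omega))$ with $\int_I\!\int_\Omega p\divv_x\tens{v}\,dx\,dt = \Lambda(\tens{v})$ for all such $\tens{v}$.

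I set $\tens[2]{w} := -\tens[2]{T}(\nu\tens{u},p) = -\nu\tens[2]{D}(\tens{u}) + p\identity$, which is symmetric and in $L_2(I;L_2(\Omega;\tens[2]{\mathbb{S}}))$, and it remains to show $(\tens{u},\tens[2]{w},p) \in \mathscr{Z}\times L_2(I\times\Omega)$ with the three constraints and ${\bf \bar{G}}(\tens{u},\tens[2]{w},p) = (0,\tens{f},0,\tens{u}_0,0)$. Exploiting the symmetry of $\tens[2]{D}(\tens{u})$ to rewrite $\tfrac{\nu}{2}\tens[2]{D}(\tens{u}):\tens[2]{D}(\tens{v}) = \nu\tens[2]{D}(\tens{u}):\tens[2]{\nabla}_x\tens{v}$ and $p\divv_x\tens{v} = p\identity:\tens[2]{\nabla}_x\tens{v}$, the pressure equation converts to
\begin{equation*}
\int_I\!\int_\Omega \tens{\partial}_t\tens{u}\cdot\tens{v} - \tens[2]{w}:\tens[2]{\nabla}_x\tens{v}\,dx\,dt = \int_I\!\int_\Omega \tens{f}\cdot\tens{v}\,dx\,dt \qquad (\tens{v}\in L_2(I;\tens{\mathbb{H}}^1(\Omega))).
\end{equation*}
Testing on $\tens{\mathcal D}(I\times\Omega)$ gives $\tens{\divv}_x\tens[2]{w} = \tens{f} - \tens{\partial}_t\tens{u}$ in the distributional sense, whence in $\tens{L}_2(I\times\Omega)$, yielding $\tens{\partial}_t\tens{u} + \tens{\divv}_x\tens[2]{w} = \tens{f}$.

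I expect the main obstacle to be the natural tangential boundary condition $(\identity - \tens{n}\tens{n}^\top)\tens[2]{w}\tens{n} = 0$, which must be verified in the trace sense of $(\tens{V}/\tens{V}_0)'$ defined via \eqref{eq:partial integration}. Feeding that defining identity with an arbitrary $\tens{v}\in\tens{V}$ and substituting first $\tens{\partial}_t\tens{u} + \tens{\divv}_x\tens[2]{w} = \tens{f}$ and then the displayed weak identity (which replaces $\int\tens[2]{w}:\tens[2]{\nabla}_x\tens{v}$ by $\int(\tens{\partial}_t\tens{u} - \tens{f})\cdot\tens{v}$), the right-hand side of \eqref{eq:partial integration} collapses to $\int_I\!\int_\Omega \tens{\partial}_t\tens{u}\cdot\tens{v} + \tens{u}\cdot\tens{\partial}_t\tens{v}\,dx\,dt$, which vanishes by integration by parts in time using $\tens{v}(0)=\tens{v}(T)=0$. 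All membership conditions for $(\tens{u},\tens[2]{w},p)$ in $\mathscr{Z}\times L_2(I\times\Omega)$ are then satisfied, the three constraints hold by construction (with $Mp=0$ since $p\in L_2(I;L_{2,0}(\Omega))$), and ${\bf \bar{G}}(\tens{u},\tens[2]{w},p)=(0,\tens{f},0,\tens{u}_0,0)$, completing the proof.
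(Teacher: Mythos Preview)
Your proof is correct and follows essentially the same route as the paper's. Both directions proceed identically: for ``$\supseteq$'' you invoke Theorem~\ref{thm1} with the bilinear form built from $\tens[2]{D}$ to produce $\tens{u}$, then Proposition~\ref{prop1} and the inf--sup condition \eqref{eeninfsup} to recover $p$, set $\tens[2]{w}:=-\tens[2]{T}(\nu\tens{u},p)$, and verify membership in $\mathscr{Z}$ by testing the resulting weak identity against $\tens{\mathcal D}(I\times\Omega)$ and then against $\tens{V}$ via \eqref{eq:partial integration}. The only cosmetic difference is that for ``$\subseteq$'' the paper cites the boundedness of $\gamma_0$ from Theorem~\ref{thm1} on the Gelfand triple $\tens{\mathcal H}^1(\Omega)\hookrightarrow\tens{\mathcal H}^0(\Omega)\hookrightarrow\tens{\mathcal H}^1(\Omega)'$, whereas you argue via $\tens{u}\in C(\bar I;\tens{L}_2(\Omega))$ and closedness of $\tens{\mathcal H}^0(\Omega)$; both are equally valid.
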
 

\begin{proof} For $(\tens{u},\tens[2]{w})\in \mathscr{Z}$, we have $ \tens{\partial}_t \tens{u} +\tens{\divv}_x \tens[2]{w} \in L_2(I \times \Omega)$ and
$\tens{u} \in L_2(I;\tens{\mathbb{H}}^1(\Omega)) \cap H^1(I;\tens{\mathbb{H}}^1(\Omega)')$ by Lemma~\ref{lem2}. When $\divv_x \tens{u}=0$, then even
$\tens{u} \in L_2(I;\tens{\mathcal H}^1(\Omega)) \cap H^1(I;\tens{\mathcal H}^1(\Omega)')$, and so $\tens{u}(0,\cdot) \in \tens{\mathcal H}^0(\Omega)$ by Theorem~\ref{thm1}, which completes the proof of `$\subset$'.

To show `$\supset$', given $\tens{f} \in \tens{L}_2(I \times \Omega)$ and $\tens{u}_0 \in \tens{\mathcal H}^0(\Omega)$, let
$\tens{u} \in L_2(I;\tens{\mathcal H}^1(\Omega)) \cap H^1(I;\tens{\mathcal H}^1(\Omega)')$  with $\tens{u}(0,\cdot)=\tens{u}_0$ be the solution of
$$
\int_I \int_\Omega \tens{\partial}_t \tens{u} \cdot \tens{v}+\tfrac{\nu}{2} \tens[2]{D}(\tens{u}):  \tens[2]{D}(\tens{v}) \,dx\,dt=
\int_I \int_\Omega \tens{f} \cdot \tens{v}\,dx\,dt \quad(\tens{v} \in L_2(I;\tens{\mathcal H}^1(\Omega))),
$$
whose existence follows from Theorem~\ref{thm1} (which is applicable due to~\eqref{eq:korn}).
Recall from Proposition~\ref{prop1} that $\tens{u} \in H^1(I;\tens{\mathbb H}^1(\Omega)')$.
Let $p \in L_2(I;L_{2,0}(\Omega))$, i.e., $p \in L_2(I\times \Omega)$ with $Mp=0$, be the solution of
\be \label{press}
\int_I \int_\Omega p \divv_x\tens{v} \,dx\,dt=\int_I \int_\Omega \tens{\partial}_t \tens{u}\cdot \tens{v} +\tfrac{\nu}{2} \tens[2]{D}(\tens{u}): \tens[2]{D}(\tens{v})\,dx\,dt-\int_I \int_\Omega \tens{f} \cdot \tens{v}\,dx\,dt
\ee
($\tens v \in L_2(I;\tens{\mathbb{H}}^1(\Omega))$), whose existence follows from \eqref{eeninfsup} and the fact the right-hand side vanishes for $\tens{v} \in L_2(I;\tens{\mathcal H}^1(\Omega))$ being the kernel of $\divv_x$ in $L_2(I;\tens{\mathbb H}^1(\Omega))$. 
Setting $\tens[2]{w}:=-\tens[2]{T}(\nu \tens{u},p)=-\nu\tens[2]{D}(\tens{u})+p \identity$, the symmetry of $\tens[2]{w}$ shows that \eqref{press} is equivalent to
\be \label{26}
\int_I \int_\Omega \tens{\partial}_t \tens{u}\cdot \tens{v} -\tens[2]{w} : \tens[2]{\nabla}_x \tens{v} \,dx\,dt=\int_I \int_\Omega \tens{f} \cdot \tens{v}\,dx\,dt
\quad (v \in L_2(I;\tens{\mathbb{H}}^1(\Omega))).
\ee
Realizing that for smooth $\tens{v}$ that vanish on $\partial I \times \Omega$, 
$\int_I \!\int_\Omega \tens{\partial}_t \tens{u}\cdot \tens{v}\! +\! \tens{u}\cdot \tens{\partial}_t\tens{v}   \,dxdt=0$, 
and recalling that
$(\tens{\partial}_t \tens{u}+\tens{\divv}_x \tens[2]{w})_i$ is the divergence of the vector field $(\tens{u}_i,\tens[2]{w}_{i\cdot})^\top$, by considering \eqref{26} for $\tens{v} \in \tens{\mathcal D}(I \times \Omega)$ we infer that $\tens{\partial}_t \tens{u}+\tens{\divv}_x \tens[2]{w}=\tens{f} \in \tens{L}_2(I \times \Omega)$.
For all $v \in \tens{V}$, \eqref{eq:partial integration} and Remark~\ref{rem:density} give that
\begin{align*}
&\int_I \int_{\partial\Omega} 
(\identity-\tens{n} \tens{n}^\top)\tens[2]{w}\tens{n} \cdot \tens{v} \,ds \,dt=
\int_I \int_{\partial\Omega} 
\tens[2]{w}\tens{n} \cdot \tens{v} \,ds \,dt\\
&=
\int_I \int_\Omega \tens[2]{w}:\tens[2]{\nabla}_x \tens{v}-\tens{\partial}_t \tens{u}\cdot \tens{v}
+(\tens{\partial}_t \tens{u}+ \tens{\divv}_x \tens[2]{w})\cdot \tens{v}\,dx\,dt=0.
\end{align*}
Therefore, we conclude that $(\tens{u},\tens[2]{w},p) \in \mathscr{Z} \times L_2(I;L_{2,0}(\Omega))$ with ${\bf \bar{G}}(\tens{u},\tens[2]{w},p)=(0,\tens{f},0,\tens{u}_0,0)$, which completes the proof.
\end{proof}

\begin{remark} In view of Proposition~\ref{proprange}, one might incorporate the condition $\divv_x \tens{u}=0$ in the specification of the domain of ${\bf \bar{G}}$, i.e., replace $\mathscr{Z}$ by
\begin{align*}
\tilde{\mathscr{Z}}:=\{(\tens{u},\tens[2]{w}) \in L_2(I;\tens{\mathcal H}^1(\Omega))\times L_2(I;L_2(\Omega;\tens[2]{\mathbb{S}})) \colon &
\tens{\partial}_t \tens{u}+\tens{\divv}_x \tens[2]{w} \in \tens{L}_2(I \times \Omega),\\
& (\identity-\tens{n} \tens{n}^\top)\tens[2]{w}|_{I\times \partial \Omega}\tens{n}=0\},
\end{align*}
and omit the third component $\divv_x \tens{u}$ from the definition of ${\bf \bar{G}}$. Clearly, the reason not to do so is that generally it is hard to construct suitable finite dimensional subspaces of $\tilde{\mathscr{Z}}$.
\end{remark}

\section{Numerical approximation} \label{sec:a priori}
The following proposition recalls that least-squares approximations of a well-posed least-squares system are quasi-optimal.
\begin{proposition} \label{prop:approx} Let 
$\tens{\Pi} \in \cL(\tens{\mathbb{H}}^1(\Omega),\tens{\mathbb{H}}^1(\Omega))$. Let $\mathscr{Z}_\delta \times P_\delta$ be a closed subspace of $\mathscr{Z} \times L_2(I \times \Omega)$.
Given $F \in \mathscr{F}$, let $(\tens{u},\tens[2]{w},p)$ or $(\tens{u}_\delta,\tens[2]{w}_\delta,p_\delta)$ be the minimizers over
$\mathscr{Z} \times L_2(I \times \Omega)$ or 
$\mathscr{Z}_\delta \times P_\delta$, respectively, of
$$
\tfrac12 \|F - {\bf \bar{G}}(\cdot,\cdot,\cdot)\|^2_{\mathscr{F}}.
$$
Then with
\begin{align}\label{constantM}
\frak{M}:=\sup_{0 \neq (\tens{\hat{u}},\tens[2]{\hat{w}},\hat{p}) \in {\mathscr{Z} \times L_2(I \times \Omega)}} \|{\bf \bar{G}}(\tens{\hat{u}},\tens[2]{\hat{w}},\hat{p})\|_{\mathscr{F}} / \|(\tens{\hat{u}},\tens[2]{\hat{w}},\hat{p})\|_{\mathscr{Z} \times L_2(I \times \Omega)},\\ \label{constantm}
\frak{m}:=\inf_{0 \neq (\tens{\hat{u}},\tens[2]{\hat{w}},\hat{p}) \in {\mathscr{Z} \times L_2(I \times \Omega)}} \|{\bf \bar{G}}(\tens{\hat{u}},\tens[2]{\hat{w}},\hat{p})\|_{\mathscr{F}} / \|(\tens{\hat{u}},\tens[2]{\hat{w}},\hat{p})\|_{\mathscr{Z} \times L_2(I \times \Omega)},
\end{align}
which satisfy $\frak{M} <\infty$ and $\frak{m}>0$ by Corollary~\ref{corolbarG}, it holds that
\be \label{6}
\begin{split}
\|(\tens{u},\tens[2]{w},p)-&(\tens{u}_\delta,\tens[2]{w}_\delta,p_\delta)\|_{\mathscr{Z} \times L_2(I \times\Omega)}\\ & \leq \tfrac{\frak{M}}{\frak{m}} \inf_{(\tens{\hat{u}},\tens[2]{\hat{w}},\hat{p}) \in \mathscr{Z}_\delta \times P_\delta}
\|(\tens{u},\tens[2]{w},p)-(\tens{\hat{u}},\tens[2]{\hat{w}},\hat{p})\|_{\mathscr{Z} \times L_2(I \times \Omega)}.
\end{split}
\ee
If $F \in \ran {\bf \bar{G}}$, and so in particular if $F \in \{0\} \times \tens{L}_2(I \times \Omega) \times \{0\} \times \tens{\mathcal H}^0(\Omega) \times \{0\}$, then for any $(\tens{\hat{u}},\tens[2]{\hat{w}},\hat{p}) \in \mathscr{Z} \times L_2(I \times\Omega)$, we have the \emph{a posteriori bounds}
$$
\frak{M}^{-1} \|F-{\bf \bar{G}}(\tens{\hat{u}},\tens[2]{\hat{w}},\hat{p})\|_{\mathscr{F}}\leq
\|(\tens{u},\tens[2]{w},p)-(\tens{\hat{u}},\tens[2]{\hat{w}},\hat{p})\|_{\mathscr{Z} \times L_2(I \times \Omega)} \leq 
\frak{m}^{-1} \|F-{\bf \bar{G}}(\tens{\hat{u}},\tens[2]{\hat{w}},\hat{p})\|_{\mathscr{F}}.
$$
\end{proposition}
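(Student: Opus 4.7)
The plan is to reduce everything to the two-sided bound
$$\frak{m}\|(\tens{v},\tens[2]{r},q)\|_{\mathscr{Z}\times L_2(I\times\Omega)} \leq \|{\bf \bar{G}}(\tens{v},\tens[2]{r},q)\|_{\mathscr{F}} \leq \frak{M}\|(\tens{v},\tens[2]{r},q)\|_{\mathscr{Z}\times L_2(I\times\Omega)},$$
which is the content of Corollary~\ref{corolbarG} together with the definitions \eqref{constantM}--\eqref{constantm}. In particular $\ran {\bf \bar{G}}$ is closed in $\mathscr{F}$, so the $\mathscr{F}$-orthogonal projection $\Pi$ onto $\ran {\bf \bar{G}}$ is well defined, the continuous minimizer $(\tens{u},\tens[2]{w},p)$ exists and is characterized by ${\bf \bar{G}}(\tens{u},\tens[2]{w},p) = \Pi F$, and the discrete minimizer exists by closedness of $\mathscr{Z}_\delta\times P_\delta$ combined with the lower bound on ${\bf \bar{G}}$.

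For the quasi-optimality \eqref{6}, I would invoke the Pythagorean identity: for every $(\tens{\hat{u}},\tens[2]{\hat{w}},\hat{p}) \in \mathscr{Z}\times L_2(I\times\Omega)$,
$$\|F-{\bf \bar{G}}(\tens{\hat{u}},\tens[2]{\hat{w}},\hat{p})\|^2_{\mathscr{F}} = \|F-\Pi F\|^2_{\mathscr{F}} + \|{\bf \bar{G}}((\tens{u},\tens[2]{w},p)-(\tens{\hat{u}},\tens[2]{\hat{w}},\hat{p}))\|^2_{\mathscr{F}},$$
which uses only that $F-\Pi F \perp \ran {\bf \bar{G}}$. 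Applying this identity both to $(\tens{u}_\delta,\tens[2]{w}_\delta,p_\delta)$ and to an arbitrary $(\tens{\hat{u}},\tens[2]{\hat{w}},\hat{p}) \in \mathscr{Z}_\delta\times P_\delta$, the minimizing property of $(\tens{u}_\delta,\tens[2]{w}_\delta,p_\delta)$ over $\mathscr{Z}_\delta\times P_\delta$ yields
$$\|{\bf \bar{G}}((\tens{u},\tens[2]{w},p)-(\tens{u}_\delta,\tens[2]{w}_\delta,p_\delta))\|_{\mathscr{F}} \leq \|{\bf \bar{G}}((\tens{u},\tens[2]{w},p)-(\tens{\hat{u}},\tens[2]{\hat{w}},\hat{p}))\|_{\mathscr{F}}.$$
Bounding the left-hand side from below by $\frak{m}$ times the error norm and the right-hand side from above by $\frak{M}$ times the approximation-error norm, followed by taking the infimum over $\mathscr{Z}_\delta\times P_\delta$, delivers \eqref{6}.

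The a posteriori bounds are even more direct. Under the hypothesis $F \in \ran {\bf \bar{G}}$ one has $\Pi F = F$, hence ${\bf \bar{G}}(\tens{u},\tens[2]{w},p) = F$ and the residual equals ${\bf \bar{G}}$ applied to the error; the two-sided bound then yields both inequalities in a single step. The parenthetical assertion that $\{0\} \times \tens{L}_2(I \times \Omega) \times \{0\} \times \tens{\mathcal H}^0(\Omega) \times \{0\} \subset \ran {\bf \bar{G}}$ is exactly Proposition~\ref{proprange}. No step presents a serious obstacle: the proposition is the standard quasi-optimality/residual-error equivalence consequence of a boundedly invertible first-order system, and its only real input is the well-posedness already established.
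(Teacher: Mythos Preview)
Your proposal is correct and follows essentially the same route as the paper. The paper's proof writes down the normal (Euler--Lagrange) equations satisfied by the continuous and discrete minimizers and then invokes a C\'ea-type result from the literature, together with the definitions of $\frak{M}$ and $\frak{m}$; your Pythagorean decomposition of the residual and direct use of the minimizing property simply unpacks that cited argument, and your treatment of the a posteriori bounds via ${\bf \bar{G}}(\tens{u},\tens[2]{w},p)=F$ is identical to the paper's.
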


\begin{proof}
The first statement follows from $(\tens{u},\tens[2]{w},p)$ and $(\tens{u}_\delta,\tens[2]{w}_\delta,p_\delta)$ being the solutions of 
\begin{align*}
\langle {\bf \bar{G}} (\tens{u},\tens[2]{w},p),{\bf \bar{G}} (\tens{\hat{u}},\tens[2]{\hat{w}},\hat{p})\rangle_{\mathscr{F}} &=\langle F,{\bf \bar{G}} (\tens{\hat{u}},\tens[2]{\hat{w}},\hat{p})\rangle_{\mathscr{F}} \quad((\tens{\hat{u}},\tens[2]{\hat{w}},\hat{p}) \in \mathscr{Z} \times L_2(I \times \Omega)),\\
\langle {\bf \bar{G}} (\tens{u}_\delta,\tens[2]{w}_\delta,p_\delta),{\bf \bar{G}} (\tens{\hat{u}},\tens[2]{\hat{w}},\hat{p})\rangle_{\mathscr{F}} &=\langle F,{\bf \bar{G}} (\tens{\hat{u}},\tens[2]{\hat{w}},\hat{p})\rangle_{\mathscr{F}} \quad((\tens{\hat{u}},\tens[2]{\hat{w}},\hat{p}) \in \mathscr{Z}_\delta \times P_\delta)
\end{align*}
respectively, the definitions of $\frak{M}$ and $\frak{m}$, and, e.g., \cite[Remarks~(2.8.5)]{35.7}.

The second statement follows from ${\bf \bar{G}} (\tens{u},\tens[2]{w},p)=F$.
\end{proof}

To apply Proposition~\ref{prop:approx} we need to select suitable finite dimensional subspaces of $\mathscr{Z} \times L_2(I\times\Omega)$ of finite element type. 
We consider spaces of type $\tens{U}_\delta \times \tens[2]{W}_\delta \times P_\delta$, where to achieve conformity
\begin{align} \label{21}
\tens{U}_\delta &\subset \tens{U}:=\{\tens{u} \in L_2(I;\tens{\mathbb{H}}^1(\Omega)) \cap H^1(I;\tens{L}_2(\Omega))\colon \divv_x \tens{u} \in H^1(I;L_{2,0}(\Omega))\},\\ 
\label{22}
\tens[2]{W}_\delta & \subset L_2(I;H_0(\tens{\divv};\Omega,\tens[2]{\mathbb{S}})),
\\
P_\delta & \subset L_2(I \times \Omega),
\end{align}
where
$$
H_0(\tens{\divv};\Omega,\tens[2]{\mathbb{S}}):=\{\tens[2]{v} \in L_2(\Omega,\tens[2]{\mathbb{S}})\colon \tens{\divv}\, \tens[2]{v} \in \tens{L}_2(\Omega),\,(\identity-\tens{n}\tens{n}^\top) \tens[2]{v}|_{\partial\Omega} \tens{n}=0\}.
$$
Recall that the latter condition $(\identity-\tens{n}\tens{n}^\top) \tens[2]{v}|_{\partial\Omega} \tens{n}=0$ is equivalent to $\tens[2] v \tens n \cdot \tens \tau = 0$ for $\tens \tau\perp \tens n$.
By equipping $\tens{U}$ and $L_2(I;H_0(\tens{\divv};\Omega,\tens[2]{\mathbb{S}}))$ with their natural norms, notice that
$$
\tens{U} \times L_2(I;H_0(\tens{\divv};\Omega,\tens[2]{\mathbb{S}})) \hookrightarrow \mathscr{Z}.
$$

When considering $\tens{U}_\delta$ to be a finite element space w.r.t.~a general partition of $I \times \Omega$ into, say, $(n+1)$-simplices, because of the constraint on $\divv_x \tens{u}$ one needs $C^1$-elements which are not very practical.
Therefore, we will consider $\tens{U}_\delta$ (and $\tens[2]{W}_\delta$ and $P_\delta$) to be finite element spaces w.r.t.~(a common) partition of  $I \times \Omega$ into \emph{prismatic elements}, i.e., intervals-in-time times $n$-simplices-in-space.
Such spaces have the advantage that the interelement smoothness conditions needed to be conforming w.r.t.~the non-isotropic spaces at the right-hand sides of \eqref{21} and \eqref{22} can be reduced to their minimum. 
We expect that such a choice can have a beneficial effect on the convergence rates that can be achieved for solutions that exhibit singularities using appropriately \emph{locally refined partitions}.

Yet, in the current work we will restrict ourselves to quasi-uniform \emph{conforming partitions}, and postpone the treatment of locally refined, and so generally necessarily \emph{nonconforming} partitions to forthcoming work.
Although not essential, furthermore for our convenience we consider the situation of a \emph{two-dimensional} domain $\Omega$, and finite element spaces of \emph{lowest order}.

\subsection{Trial space for the velocities}
Let $I_\delta$ be a partition of $I$ into subintervals, and $\Omega_\delta$  be a conforming partition of $\Omega$ into uniformly shape regular triangles.
Let $N_\delta$ or $E_\delta$ denote the union of the vertices or edges of the $K \in \Omega_\delta$.

Let $U_\delta^t$ and $U_\delta^x$ be the spaces of continuous piecewise linears w.r.t.\ $I_\delta$ and $\Omega_\delta$, respectively, and $\tens{U}_\delta^x:=U_\delta^x \times U_\delta^x$.
For any $e \in E_\delta$, let $\tens{\phi}^e =\tens{\phi}^e_\delta \in \tens{C}(\overline{\Omega})$ be the `edge bubble' introduced in \cite{38.77}, see also \cite[\S3.1-3]{35.927}. 
We have $\int_e \tens{\phi}^e \cdot \tens{n} \,ds \neq 0$, and with $\Omega_\delta^e:=\{K \in \Omega_\delta\colon e \subset \partial K\}$, $\supp \tens{\phi}^e=\cup \Omega_\delta^e$. For $K \in \Omega_\delta^e$, $\tens{\phi}^e|_K$ is piecewise linear w.r.t.~the Powell--Sabin split of $K$, and $\divv \tens{\phi}^e|_K \in P_0(K)$.
We define
$$
\tens{U}_\delta:=\underbrace{U^t_\delta \otimes \{\tens{v} \in \tens{U}_\delta^x\colon \tens{v}\cdot \tens{n}=0 \text{ on } \partial\Omega\}}_{\tens{\widehat{U}}_\delta:=} +
U^t_\delta \otimes\Span\{\tens{\phi}^e\colon e \in E_\delta\text{ with }e\not\subset \partial\Omega\}.
$$%
For $\tens{v} \in \tens{U}_\delta^x$, the condition $\tens{v}\cdot \tens{n}=0$ on  $\partial\Omega$ means that for $z \in N_\delta \cap \partial\Omega$, 
$\tens{v}(z)=0$ when $z$ is a corner of $\Omega$, and $(\tens{v} \cdot \tens{n})(z)=0$ otherwise.
We will establish the following \emph{a priori bound} on the approximation error.

\begin{proposition} \label{prop:velo} For $\tens{u} \in \tens{H}^2(I \times \Omega) \cap L_2(I;\tens{\mathbb{H}}^1(\Omega))$ with
$\divv_x \tens{u} \in H^2(I;L_2(\Omega)) \cap H^1(I;H^1(\Omega))$, with $h_\delta:=\max(\max_{J \in I_\delta} \diam J,\max_{K \in \Omega_\delta} \diam K)$ it holds that
$$
\inf_{\tens{v} \in \tens{U}_\delta}\|\tens{u}-\tens{v}\|_{\tens{U}} \lesssim h_\delta \big(\|\tens{u}\|_{\tens{H}^2(I \times \Omega)}+\|\divv_x \tens{u}\|_{H^2(I;L_2(\Omega))}+\|\divv_x \tens{u}\|_{H^1(I;H^1(\Omega))}\big).
$$
In particular, if $\divv_x \tens{u}=0$, then $\inf_{\tens{v} \in \tens{U}_\delta}\|\tens{u}-\tens{v}\|_{\tens{U}} \lesssim h_\delta \|\tens{u}\|_{\tens{H}^2(I \times \Omega)}$.
\end{proposition}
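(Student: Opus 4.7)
The plan is to produce a quasi-interpolator $\Pi_\delta \colon \tens{U} \to \tens{U}_\delta$ of tensor-product form $\Pi_\delta = \Pi_\delta^t \otimes \Pi_\delta^x$ enjoying the commuting diagram property
\begin{equation*}
\divv_x \bigl(\Pi_\delta^x \tens{v}\bigr) = Q_\delta^x \, \divv_x \tens{v} \qquad (\tens{v} \in \tens{\mathbb{H}}^1(\Omega)),
\end{equation*}
where $Q_\delta^x$ denotes the $L_2(\Omega)$-orthogonal projector onto the piecewise constants on $\Omega_\delta$. It is precisely this identity that prevents the $H^1(I;L_{2,0}(\Omega))$-term in $\|\cdot\|_{\tens{U}}$ from inflating the regularity requirements on $\tens{u}$: the divergence error will be reduced to an approximation error for the scalar field $\divv_x \tens{u}$ alone.

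For the spatial factor I would first take a Scott--Zhang-type interpolator $\widehat\Pi_\delta^x$ into $\{\tens{v} \in \tens{U}_\delta^x \colon \tens{v} \cdot \tens{n} = 0 \text{ on } \partial\Omega\}$, known to be $\tens{H}^1(\Omega)$-stable and locally first-order accurate while preserving the normal-component boundary condition, and then correct it by adding edge bubbles,
\begin{equation*}
\Pi_\delta^x \tens{v} := \widehat\Pi_\delta^x \tens{v} + \sum_{e \in E_\delta,\, e \not\subset \partial\Omega} \alpha_e(\tens{v}) \, \tens{\phi}^e,
\end{equation*}
the coefficients $\alpha_e(\tens{v}) \in \R$ being uniquely determined by the interior-edge flux matching $\int_e \Pi_\delta^x \tens{v} \cdot \tens{n}\,ds = \int_e \tens{v} \cdot \tens{n}\,ds$, which is solvable because $\int_e \tens{\phi}^e \cdot \tens{n}\,ds \neq 0$; on boundary edges both sides already vanish by construction. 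Since $\divv \tens{w}|_K \in P_0(K)$ for every $K \in \Omega_\delta$ and every $\tens{w}$ in the enriched space, and since by the divergence theorem together with the flux matching
\begin{equation*}
\int_K \divv (\Pi_\delta^x \tens{v}) \, dx = \sum_{e \subset \partial K} \int_e \Pi_\delta^x \tens{v} \cdot \tens{n}_K \, ds = \int_K \divv \tens{v} \, dx \qquad (K \in \Omega_\delta),
\end{equation*}
the commuting identity follows. For the temporal factor I take $\Pi_\delta^t$ to be a Scott--Zhang interpolator onto $U_\delta^t$ with the standard first-order bounds in $L_2(I)$ and $H^1(I)$.

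With $\Pi_\delta := \Pi_\delta^t \otimes \Pi_\delta^x$, the squared $\tens{U}$-norm error splits into three contributions. The $L_2(I;\tens{\mathbb{H}}^1(\Omega))$- and $H^1(I;\tens{L}_2(\Omega))$-parts are controlled by the standard tensor-product decomposition $\mathrm{id} - \Pi_\delta^t \otimes \Pi_\delta^x = (\mathrm{id} - \Pi_\delta^t) \otimes \mathrm{id} + \Pi_\delta^t \otimes (\mathrm{id} - \Pi_\delta^x)$, the first-order approximation and stability of each factor, and the hypothesis $\tens{u} \in \tens{H}^2(I \times \Omega)$. For the third piece, the commuting identity yields
\begin{equation*}
\divv_x (\Pi_\delta \tens{u}) = (\Pi_\delta^t \otimes Q_\delta^x) \, \divv_x \tens{u},
\end{equation*}
reducing the problem to the $H^1(I;L_2(\Omega))$-approximation error of the \emph{scalar} field $\divv_x \tens{u}$ by $\Pi_\delta^t \otimes Q_\delta^x$; a second tensor-product split combined with first-order bounds for $\Pi_\delta^t$ and $Q_\delta^x$, applied on the assumed regularity $\divv_x \tens{u} \in H^2(I;L_2(\Omega)) \cap H^1(I;H^1(\Omega))$, produces the remaining $h_\delta$ terms. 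In the divergence-free case $\divv_x \tens{u} \equiv 0$ the identity forces $\divv_x \Pi_\delta \tens{u} = 0$, so only the first two contributions survive, yielding the sharper bound.

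The main obstacle is the simultaneous fulfillment, by $\Pi_\delta^x$, of three competing demands: respecting the normal-component boundary condition, $\tens{H}^1(\Omega)$-stability, and the edgewise flux-preservation underlying the commuting diagram. The Powell--Sabin piecewise-linear structure of the bubbles $\tens{\phi}^e$ and the non-degeneracy $\int_e \tens{\phi}^e \cdot \tens{n}\,ds \neq 0$ from \cite{38.77} are precisely what reconcile these demands.
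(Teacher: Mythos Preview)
Your approach is correct and essentially identical to the paper's: the paper also constructs a Scott--Zhang-type interpolant $\hat{\mathcal J}_\delta$ into $\{\tens v\in\tens U_\delta^x:\tens v\cdot\tens n=0\text{ on }\partial\Omega\}$, corrects it by edge bubbles so that all edge fluxes are matched (yielding the commuting identity $\divv\,\mathcal J_\delta=\mathcal Q_\delta\,\divv$), tensorizes with a temporal Scott--Zhang interpolant $\mathcal I_\delta$, and estimates via the same splitting $\identity-\mathcal I_\delta\otimes\mathcal J_\delta=(\identity-\mathcal I_\delta)\otimes\identity+\mathcal I_\delta\otimes(\identity-\mathcal J_\delta)$. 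The only cosmetic difference is that the paper formally sums the bubble correction over \emph{all} edges and then notes that the boundary-edge coefficients vanish for $\tens v\in\tens{\mathbb H}^1(\Omega)$, whereas you restrict the sum to interior edges from the outset.
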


\begin{proof} 
On $\tens{H}^1(\Omega)$ we define a Scott--Zhang type projector $\tens{\hat{\mathcal J}}_\delta$ onto $\tens{U}_\delta^x$ 
that will map $\tens{\mathbb{H}}^1(\Omega)$ onto  $\{\tens{v} \in \tens{U}_\delta^x\colon \tens{v}\cdot \tens{n}=0 \text{ on } \partial\Omega\}$.
Let $\tens{v} \in \tens{H}^1(\Omega)$. For $z \in N_\delta \setminus \partial\Omega$, we define $(\tens{\hat{\mathcal J}}_\delta \tens{v})(z)$ by applying coordinate-wise the standard Scott--Zhang interpolator at an interior node.

For $z \in N_\delta \cap \partial\Omega$, let $e^a, e^b \in E_\delta \cap \partial\Omega$ with $e^a\cap e^b=z$, having normals $\tens{n}^a$ and $\tens{n}^b$, respectively.
For $c \in \{a,b\}$, and with $\tilde{z}^c  \in N_\delta \cap \partial\Omega$ denoting the other endpoint of $e^c$, 
let $\psi_{e^c} \in P_1(e^c)$ be such that $\int_{e^c} \phi_z \psi_{e^c}\,ds=1$ and $\int_{e^c} \phi_{\tilde{z}^c} \psi_{e^c}\,ds=0$, where $\phi_z$ and $\phi_{\tilde{z}^c}$ are the standard nodal basis functions of $U^x_\delta$ associated to $z$ and $\tilde{z}^c$, respectively.
If $z$ is not a corner of $\Omega$,  select $c \in \{a,b\}$ arbitrary, and define $(\tens{\hat{\mathcal J}}_\delta \tens{v})(z)$ by 
$$
(\tens{\hat{\mathcal J}}_\delta \tens{v})(z)\cdot \tens{n}^c=\int_{e^c} \tens{v}\cdot \tens{n}^c\psi_{e^c} \,ds,\quad
(\tens{\hat{\mathcal J}}_\delta \tens{v})(z)\cdot \tens{\tau}^c=\int_{e^c} \tens{v}\cdot \tens{\tau}^c\psi_{e^c} \,ds
$$
with $\tens{\tau}^c$ a unit vector that is orthogonal to $\tens{n}^c$.
For $z$ being a corner of $\Omega$, define $(\tens{\hat{\mathcal J}}_\delta \tens{v})(z)$ by 
$$
(\tens{\hat{\mathcal J}}_\delta \tens{v})(z)\cdot \tens{n}^a=\int_{e^a} \tens{v}\cdot \tens{n}^a\psi_{e^a} \,ds,\quad
(\tens{\hat{\mathcal J}}_\delta \tens{v})(z)\cdot \tens{n}^b=\int_{e^b} \tens{v}\cdot \tens{n}^b\psi_{e^b} \,ds.
$$

Since $\tens{\hat{\mathcal J}}_\delta$ locally reproduces $\tens{P}_1$, standard arguments show that for $K \in \Omega_\delta$ and $m,k \in \{0,1\}$,
$$
\|\tens{v}-\tens{\hat{\mathcal J}}_\delta \tens{v}\|_{\tens{H}^m(K)} \lesssim \diam(K)^{k+1-m} |\tens{v}|_{H^{k+1}(\Omega_\delta(K))},
$$
where $\Omega_\delta(K):=\{K' \in \Omega_\delta\colon K \cap K' \neq \emptyset\}$. Moreover $\tens{\hat{\mathcal J}}_\delta$ preserves vanishing normals, and so maps $\tens{\mathbb{H}}^1(\Omega)$ onto $\{\tens{v} \in \tens{U}_\delta^x\colon \tens{v}\cdot \tens{n}=0 \text{ on } \partial\Omega\}$.

We define  the linear map $\tens{\mathcal J}_\delta \colon \tens{H}^1(\Omega) \rightarrow \tens{U}^x_\delta +\Span\{\tens{\phi}^e \colon e \in E_\delta\}$ by 
\be \label{16}
\tens{\mathcal J}_\delta \tens{v}:=\tens{\hat{\mathcal J}}_\delta \tens{v} + \sum_{e \in E_\delta} \frac{\int_e (\tens{v}-\tens{\hat{\mathcal J}}_\delta \tens{v})\cdot \tens{n}\,ds}{\int_e \tens{\phi}^e \cdot \tens{n}\,ds} \tens{\phi}^e.
\ee
Because $\tens{\mathcal J}_\delta$  locally reproduces $\tens{P}_1$, we have that for  $K \in \Omega_\delta$ and $m,k \in \{0,1\}$,
\be \label{14}
\|\tens{v}-\tens{\mathcal J}_\delta \tens{v}\|_{\tens{H}^m(K)} \lesssim \diam(K)^{k+1-m} |\tens{v}|_{H^{k+1}(\Omega_\delta(K))}.
\ee

For any $K \in \Omega_\delta$ and $\tens{v} \in \tens{H}^1(\Omega)$, it holds that
$$
\int_T \divv \tens{\mathcal J}_\delta \tens{v}\,dx=
\int_{\partial K} \tens{\mathcal J}_\delta \tens{v} \cdot \tens{n}\,dx=
\int_{\partial K} \tens{v} \cdot \tens{n}\,dx=\int_T \divv \tens{v}\,dx,
$$
and so thanks to $\divv \tens{\mathcal J}_\delta \tens{v}$ being piecewise constant w.r.t.~$\Omega_\delta$, with $\mathcal{Q}_\delta$ being the $L_2(\Omega)$-orthogonal projector onto the piecewise constants, we have the \emph{commuting diagram}
\be \label{7}
 \divv \tens{\mathcal J}_\delta=\mathcal{Q}_\delta \divv \quad\text{on } \tens{H}^1(\Omega).
\ee
Finally,
$$
\ran \tens{\mathcal J}_\delta|_{\tens{\mathbb{H}}^1(\Omega)} \subset 
\{\tens{v} \in \tens{U}_\delta^x\colon \tens{v}\cdot \tens{n}=0 \text{ on } \partial\Omega\}+\Span\{\tens{\phi}^e\colon e \in E_\delta \text{ with }e\not\subset\partial\Omega\}).
$$

Let ${\mathcal I}_\delta\colon H^1(I) \mapsto U^t_\delta$ denote the Scott-Zhang projector. Noticing that ${\mathcal I}_\delta \otimes \tens{\mathcal J}_\delta|_{\tens{\mathbb{H}}^1(\Omega)}$ maps into $\tens{U}_\delta$, and $\identity-{\mathcal I}_\delta\otimes \tens{\mathcal J}_\delta=
(\identity-{\mathcal I}_\delta) \otimes \identity+{\mathcal I}_\delta \otimes (\identity-\tens{\mathcal J}_\delta)$, and taking into account \eqref{7}, we have that for $\tens{u} \in \tens{H}^2(I \times \Omega) \cap L_2(I;\tens{\mathbb{H}}^1(\Omega))$ with
$\divv_x \tens{u} \in H^2(I;L_2(\Omega)) \cap H^1(I;H^1(\Omega))$,
\begin{align*}
\inf_{\tens{v} \in \tens{U}_\delta}& \big\{\|\tens{u}-\tens{v}\|_{\tens{H}^1(I\times\Omega)}+\|\divv_x (\tens{u}-\tens{v})\|_{H^1(I;L_{2,0}(\Omega))}\big\} \\
&
\leq \|(\identity-{\mathcal I}_\delta) \otimes \identity \,\tens{u}\|_{\tens{H}^1(I\times\Omega)}+
\|{\mathcal I}_\delta \otimes (\identity-\tens{\mathcal J}_\delta) \tens{u}\|_{\tens{H}^1(I\times\Omega)} +\\
&\quad\,\,\|(\identity-{\mathcal I}_\delta) \otimes \identity \divv_x \tens{u}\|_{H^1(I;L_{2,0}\Omega))}+
\|{\mathcal I}_\delta \otimes (\identity-\mathcal{Q}_\delta)  \divv_x \tens{u}\|_{H^1(I;L_{2,0}\Omega))} 
 \\
&\lesssim h_\delta \big(\|\tens{u}\|_{\tens{H}^2(I \times \Omega)}+\|\divv_x \tens{u}\|_{H^2(I;L_2(\Omega))}+\|\divv_x \tens{u}\|_{H^1(I;H^1(\Omega))}\big). 
\end{align*}
This concludes the proof.
\end{proof}

\begin{remark} \label{rem2}
\emph{Without} the inclusion of the bubbles in the space $\tens{U}_\delta$, i.e., when taking the space $\tens{\widehat{U}}_\delta$, the estimate from Proposition~\ref{prop:velo} reads as
$$
\inf_{\tens{v} \in \tens{\hat{U}_\delta}}\|\tens{u}-\tens{v}\|_{\tens{U}} \lesssim h_\delta\big(
\|\tens{u}\|_{\tens{H}^2(I \times \Omega)}+\|\divv_x \tens{u}\|_{H^2(I;L_2(\Omega))}+ \|\tens{u}\|_{H^2(I;\tens{H}^1(\Omega))} \big)
$$
assuming $\tens{u} \in \tens{H}^2(I \times \Omega) \cap  H^2(I;\tens{H}^1(\Omega)) \cap  L_2(I;\tens{\mathbb{H}}^1(\Omega))$ with 
 $\divv_x \tens{u} \in H^2(I;L_2(\Omega))$.
This follows from estimating
\begin{align*}
&\|\divv_x (\identity-{\mathcal I}_\delta \otimes \tens{\hat{\mathcal J}}_\delta) \tens{u} \|_{H^1(I;L_{2,0}(\Omega))} \\
&=
\|\divv_x \big((\identity-{\mathcal I}_\delta) \otimes \identity+{\mathcal I}_\delta \otimes (\identity-\tens{\hat{\mathcal J}}_\delta)\big) \tens{u} \|_{H^1(I;L_{2,0}(\Omega))}\\ 
& \lesssim
\| (\identity -{\mathcal I}_\delta) \otimes \identity \divv_x \tens{u} \|_{H^1(I;L_{2,0}(\Omega))}+
\| {\mathcal I}_\delta \otimes (\identity-  \tens{\hat{\mathcal J}}_\delta) \tens{u} \|_{H^1(I;\tens{H}^1(\Omega))}
\\
&\lesssim
h_\delta \big( \|\divv_x \tens{u}\|_{H^2(I;L_{2,0}(\Omega))}+\|\tens{u} \|_{H^1(I;\tens{H}^2(\Omega))}\big).
\end{align*}
\end{remark}

\subsection{Trial space for the stress tensor}
We define $\tens[2]{W}_\delta=W^t_\delta \otimes \tens[2]{W}^x_\delta$, where $W^t_\delta$ is the space of piecewise constants w.r.t.~$I_\delta$, and 
$\tens[2]{W}^x_\delta$ is a finite element subspace of $H_0(\tens{\divv};\Omega,\tens[2]{\mathbb{S}})$ w.r.t.~$\Omega_\delta$.
The construction of finite element spaces for symmetric stress tensors is an intensively studied topic, see e.g.~\cite{168.75,14.4,14.5,139.6,139.7,79}.
We will use the recently introduced finite element from \cite[Remark 5.4]{38.78} because it has only 9 degrees of freedom, which is (much) less than in other constructions. Moreover, as we will see, this element has a commuting diagram property which will allow us to bound $\tens{\divv}_x$ of the approximation error in $\tens[2]{w}$ in terms of $\tens{\divv}_x \tens[2]{w}$ only (cf.~\eqref{24}). This will be beneficial in Theorem~\ref{corolletje}.

Given a triangle $K$, let $E(K)$ denote the set of its edges and  $R(K)$ the Clough--Tocher decomposition of $K$ into $3$ subtriangles by connecting the vertices with the centroid.
Let
\begin{align*}
\tens{V}_K&:=\{\tens{v} \in \tens P_0(R(K))\colon \tens{v}\cdot \tens{n} \text{ is continuous across the internal edges}\},\\
\tens[2]{W}_K&:=\{\tens[2]{w} \in P_1(R(K),\tens[2]{\mathbb{S}})\colon \tens[2]{w}\tens{n}  \text{ is continuous across the internal edges},\\
&\qquad \tens{\divv} \tens[2]{w} \in \tens{V}_K,\,\tens[2]{w}\tens{n}\cdot\tens{\tau} \in P_0(e)\,(e \in E(K))\},
\end{align*}
where $\tens \tau\perp \tens n$.

As stated in \cite{38.78}, the space $\tens[2]{W}_K$ has dimension $9$, and the zeroth order moment of $\tens[2]{w}\tens{n}\cdot\tens{\tau}$, and zeroth and first order moments of $\tens[2]{w}\tens{n}\cdot\tens{n}$ over each of the edges $e \in E(K)$ can be used as the $3*(1+2)$ degrees of freedom.

Knowing that a $\tens[2]{w}$ with $\tens[2]{w}|_K \in \tens[2]{W}_K$ ($K \in \Omega_\delta$)
is in $H(\tens{\divv};\Omega,\tens[2]{\mathbb{S}})$ whenever $\tens[2]{w}\tens{n}$ is continuous across any edge between triangles in $\Omega_\delta$, we conclude that a valid set of degrees of freedom for
$$
\tens[2]{W}^x_\delta:=\{\tens[2]{w} \in H_0(\tens{\divv};\Omega,\tens[2]{\mathbb{S}})\colon \tens[2]{w}|_{K} \in \tens[2]{W}_K\,(K \in \Omega_\delta)\}
$$
is given by the union of the aforementioned moments over all $e \in E_\delta$, setting the zeroth order moments of 
$\tens[2]{w}\tens{n}\cdot\tens{\tau}$ over the edges $e \subset \partial\Omega$ to zero.

\begin{proposition} \label{prop:stress}
For $\tens[2]{w} \in L_2\big(I;H_0(\tens{\divv};\Omega,\tens[2]{\mathbb{S}}) \cap  H^1(\Omega,\tens[2]{\mathbb{S}})\big) \cap H^1(I;H(\tens{\divv};\Omega,\tens[2]{\mathbb{S}}))$
 with $\tens{\divv}_x \tens[2]{w} \in L_2(I;\tens{H}^1(\Omega)) $, it holds that
\begin{align*}
\inf_{\tens[2]{v} \in \tens[2]{W}_\delta} \|\tens[2]{w}-\tens[2]{v}&\|_{L_2(I;H(\tens{\divv};\Omega,\tens[2]{\mathbb{S}}))} \\
&\lesssim h_\delta \big(
\|\tens[2]{w}\|_{H^1(I;H(\tens{\divv};\Omega,\tens[2]{\mathbb{S}}))}+
\|\tens[2]{w}\|_{L_2(I;H^1(\Omega,\tens[2]{\mathbb{S}}))}+
\|\tens{\divv}_x \tens[2]{w}\|_{L_2(I;\tens{H}^1(\Omega))}\big).
\end{align*}
\end{proposition}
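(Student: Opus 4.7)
The plan is to build the finite-dimensional approximant as a tensor product $\mathcal{I}^t_\delta \otimes \tens[2]{\Pi}^x_\delta$, where $\mathcal{I}^t_\delta$ denotes the $L_2(I)$-orthogonal projector onto $W^t_\delta$ (piecewise constants in time), and $\tens[2]{\Pi}^x_\delta\colon H^1(\Omega,\tens[2]{\mathbb{S}})\cap H_0(\tens{\divv};\Omega,\tens[2]{\mathbb{S}}) \to \tens[2]{W}^x_\delta$ is a spatial quasi-interpolator defined via the degrees of freedom from \cite{38.78}. Concretely, for each edge $e \in E_\delta$ I would take $\tens[2]{\Pi}^x_\delta \tens[2]{w}$ to be the element of $\tens[2]{W}^x_\delta$ whose edge moments of $(\tens[2]{\Pi}^x_\delta\tens[2]{w})\tens{n}$ against $1$ and (for the normal-normal component) against an affine function coincide with the corresponding moments of $\tens[2]{w}\tens{n}$. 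These moments are well-defined for $\tens[2]{w} \in H^1(\Omega,\tens[2]{\mathbb{S}})$ by the trace theorem, and setting the tangential moments on boundary edges to zero preserves membership of $H_0(\tens{\divv};\Omega,\tens[2]{\mathbb{S}})$.

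The crucial property I want to establish is the \emph{commuting diagram} $\tens{\divv}\,\tens[2]{\Pi}^x_\delta = \tens{\mathcal P}_\delta \tens{\divv}$, where $\tens{\mathcal P}_\delta$ denotes the $\tens L_2(\Omega)$-orthogonal projector onto the global space of functions whose restrictions to each $K \in \Omega_\delta$ lie in $\tens V_K$. This should follow from an integration-by-parts argument on each macro-element $K$: both $\tens{\divv}\,(\tens[2]{\Pi}^x_\delta \tens[2]{w})$ and $\tens{\mathcal P}_\delta \tens{\divv}\tens[2]{w}$ lie in the same finite-dimensional local space, and their differences must vanish when tested against any $\tens v \in \tens V_K$ because the test against such $\tens v$ reduces, via integration by parts over the sub-triangles of $R(K)$, precisely to the edge moments that are preserved by the definition of $\tens[2]{\Pi}^x_\delta$. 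Once this commuting diagram is available, standard Bramble--Hilbert arguments on each $K$ (using that $\tens[2]{\Pi}^x_\delta$ reproduces $P_1$-symmetric tensors locally and is $\tens L_2(K)$-bounded uniformly by a scaling argument) yield
$$
\|\tens[2]{w}-\tens[2]{\Pi}^x_\delta \tens[2]{w}\|_{\tens[2]{L}_2(\Omega)} \lesssim h_\delta \|\tens[2]{w}\|_{H^1(\Omega,\tens[2]{\mathbb{S}})},\quad
\|(\identity-\tens{\mathcal P}_\delta)\tens q\|_{\tens{L}_2(\Omega)} \lesssim h_\delta \|\tens q\|_{\tens H^1(\Omega)}.
$$

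With these tools in place I would split
$$
\tens[2]{w} - \mathcal{I}^t_\delta \otimes \tens[2]{\Pi}^x_\delta \tens[2]{w} = (\identity-\mathcal{I}^t_\delta)\otimes\identity\,\tens[2]{w} + \mathcal{I}^t_\delta \otimes (\identity-\tens[2]{\Pi}^x_\delta)\tens[2]{w},
$$
estimating the $\tens[2]{L}_2(I\times\Omega)$-contribution of the two summands by $h_\delta\|\tens[2]{w}\|_{H^1(I;H(\tens{\divv};\Omega,\tens[2]{\mathbb{S}}))}$ and $h_\delta\|\tens[2]{w}\|_{L_2(I;H^1(\Omega,\tens[2]{\mathbb{S}}))}$ respectively. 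For the divergence part I apply $\tens{\divv}_x$, use $\tens{\divv}_x[(\identity-\mathcal{I}^t_\delta)\otimes\identity]\tens[2]{w} = (\identity-\mathcal{I}^t_\delta)\otimes\identity\,\tens{\divv}_x\tens[2]{w}$ to pick up $h_\delta\|\tens{\divv}_x\tens[2]{w}\|_{H^1(I;\tens{L}_2(\Omega))}$, and on the second summand invoke the commuting diagram to rewrite
$$
\tens{\divv}_x\bigl[\mathcal{I}^t_\delta\otimes(\identity-\tens[2]{\Pi}^x_\delta)\tens[2]{w}\bigr] = \mathcal{I}^t_\delta \otimes (\identity-\tens{\mathcal P}_\delta)\tens{\divv}_x\tens[2]{w},
$$
which is bounded by $h_\delta\|\tens{\divv}_x\tens[2]{w}\|_{L_2(I;\tens H^1(\Omega))}$. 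Summing the four contributions gives the asserted estimate.

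The main obstacle I anticipate is verifying the commuting diagram $\tens{\divv}\,\tens[2]{\Pi}^x_\delta = \tens{\mathcal P}_\delta \tens{\divv}$ rigorously: one must identify the correct piecewise test space, check that the prescribed edge moments of $\tens[2]{\Pi}^x_\delta\tens[2]{w}$ capture exactly the boundary data appearing after integration by parts on the Clough--Tocher refinement, and confirm that the normal-continuity across internal edges of $R(K)$ built into $\tens[2]{W}_K$ is consistent with both sides. A secondary, but technical, point is making sure $\tens[2]{\Pi}^x_\delta$ preserves the slip-type boundary condition $(\identity-\tens{n}\tens{n}^\top)\tens[2]{v}\tens{n}=0$ on $\partial\Omega$; this should reduce to the choice of setting the tangential moment on boundary edges to zero, but it needs to be reconciled with the approximation estimate, where one may have to argue on $H^1$-traces directly rather than through a Scott--Zhang averaging.
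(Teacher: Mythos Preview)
Your overall strategy --- tensor-product interpolant, the splitting $(\identity-\mathcal{I}^t_\delta)\otimes\identity + \mathcal{I}^t_\delta\otimes(\identity-\tens[2]{\Pi}^x_\delta)$, and a commuting-diagram argument for the divergence --- is exactly the paper's. The gap is in the commuting diagram itself.

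Your claim $\tens{\divv}\,\tens[2]{\Pi}^x_\delta = \tens{\mathcal P}_\delta\,\tens{\divv}$ with $\tens{\mathcal P}_\delta$ the $\tens L_2$-\emph{orthogonal} projector onto $\prod_K \tens{V}_K$ is not correct, and your proposed verification via integration by parts on the sub-triangles of $R(K)$ fails on the internal edges: for $\tens{v}\in\tens{V}_K$ the tangential component jumps across each internal edge $f$, leaving a residual $[\tens{v}\cdot\tens{\tau}]_f\int_f(\tens[2]{w}-\tens[2]{\Pi}^x_\delta\tens[2]{w})\tens{n}\cdot\tens{\tau}\,ds$ that the degrees of freedom (which live only on $\partial K$) do not control. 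The paper instead tests against rigid motions $\tens{RM}_K:=\tens{P}_0(K)+(x_2,-x_1)^\top P_0(K)$. For such $\tens v$ one has $\tens[2]{D}(\tens v)=0$, so integration by parts on the \emph{whole} of $K$ produces only a $\partial K$ term; moreover $\tens v\cdot\tens\tau$ is constant and $\tens v\cdot\tens n$ is affine along each straight edge of $K$, so this boundary term is annihilated precisely by the prescribed edge moments. Since the $L_2(K)$-pairing between $\tens{RM}_K$ and $\tens{V}_K$ is non-degenerate (both are $3$-dimensional; see \cite{38.78}), this yields $\tens{\divv}\,\tens[2]{\mathcal{P}}_K=\tens{\mathcal Q}^*_K\,\tens{\divv}$, where $\tens{\mathcal Q}^*_K$ is the projector onto $\tens{V}_K$ with $\ran(\identity-\tens{\mathcal Q}^*_K)\perp_{\tens L_2(K)}\tens{RM}_K$ --- the \emph{biorthogonal} projector, not the $L_2$-orthogonal one. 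It still reproduces constant vectors, so $\|(\identity-\tens{\mathcal Q}^*_K)\tens{\divv}\,\tens[2]{w}\|_{\tens L_2(K)}\lesssim \diam(K)\|\tens{\divv}\,\tens[2]{w}\|_{\tens H^1(K)}$ follows by Bramble--Hilbert, and the remainder of your argument goes through unchanged.

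A minor point: $\tens[2]{W}_K$ does \emph{not} contain all of $P_1(K;\tens[2]{\mathbb{S}})$ (the constraint $\tens[2]{w}\,\tens n\cdot\tens\tau\in P_0(e)$ on $e\in E(K)$ excludes most linear tensors), so your interpolator reproduces only constants; that is all that is needed for the $\mathcal O(h_\delta)$ bound.
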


\begin{proof}
For $\tens[2]{w} \in H^1(K,\tens[2]{\mathbb{S}})$, we define the projection $\tens[2]{\mathcal{P}}_K \tens[2]{w}$ onto $\tens[2]{W}_K$ by
$$
\int_e (\tens[2]{w}-\tens[2]{\mathcal{P}}_K \tens[2]{w}) \tens{n}\cdot \tens{\tau} \,P_0(e)\,ds=0=
\int_e (\tens[2]{w}-\tens[2]{\mathcal{P}}_K \tens[2]{w}) \tens{n}\cdot \tens{n} \,P_1(e)\,ds \quad(e \in E(K)).
$$
Using that $\tens[2]{{\mathcal{P}}}_K$ reproduces $\tens[2]{W}_K$ and so in particular all constant symmetric tensors, one infers that
\be \label{23}
\|(\identity-\tens[2]{{\mathcal{P}}}_K)\tens[2]{w}\|_{L_2(K,\tens[2]{\mathbb{S}})} \lesssim \diam(K) \|\tens[2]{w}\|_{H^1(K,\tens[2]{\mathbb{S}})}.
\ee

On $H^1(\Omega,\tens[2]{\mathbb{S}})$ the corresponding global projector $\tens[2]{{\mathcal{P}}}_\delta$  is defined by
$(\tens[2]{{\mathcal{P}}}_\delta \tens[2]{w})|_K =\tens[2]{{\mathcal{P}}}_K \tens[2]{w}|_K$ ($K \in \Omega_\delta$).
It maps $H^1(\Omega,\tens[2]{\mathbb{S}}) \cap H_0(\tens{\divv};\Omega,\tens[2]{\mathbb{S}})$ onto $\tens[2]{W}^x_\delta$.

We define $\tens{RM}_K:=\tens{P}_0(K)+(x_2,-x_1)^\top P_0(K)$.
Both $\tens{V}_K$ and $\tens{RM}_K$ are $3$-dimensional, and, as stated in \cite{38.78}, the integrals over $K$ against a basis for $\tens{RM}_K$ is a valid set of degrees of freedom for 
$\tens{V}_K$. 
In particular, the \emph{biorthogonal} projector $\tens{{\mathcal{Q}}}_K$ onto $\tens{RM}_K$ with $\ran (\identity - \tens{{\mathcal{Q}}}_K) \perp_{\tens{L}_2(K)} \tens{V}_K$ is well-defined. Its adjoint $\tens {\mathcal{Q}}^*_K$ is the projector onto $ \tens{V}_K$  with
$\ran (\identity - \tens {\mathcal{Q}}^*_K) \perp_{\tens{L}_2(K)} \tens{RM}_K$.

Using that for $\tens{v} \in \tens{RM}_K$,   $\tens{v}\cdot\tens{\tau} \in P_0(e)$ for $e \in E(K)$ and $\tens[2] D(\tens{v})=0$, for $\tens[2]{w} \in H^1(K,\tens[2]{\mathbb{S}})$
it holds that
$$
\int_T \tens{\divv}(\tens[2]{w} - \tens[2]{{\mathcal{P}}}_K \tens[2]{w}) \cdot \tens{v} \,dx=
\int_{\partial K} (\tens[2]{w} - \tens[2]{{\mathcal{P}}}_K \tens[2]{w})\tens{n} \cdot \tens{v}\,ds-\tfrac12 \int_T (\tens[2]{w} - \tens[2]{{\mathcal{P}}}_K \tens[2]{w}): \tens[2]D(\tens{v})\,dx=0,
$$
i.e.,  
\begin{align*}
0 = \langle \tens{\divv}(\identity-\tens[2]{{\mathcal{P}}}_K) \tens[2]{w},\tens{RM}_K\rangle_{\tens{L}_2(K)}&=
\langle \tens{{\mathcal{Q}}}_K^*\tens{\divv}(\identity-\tens[2]{{\mathcal{P}}}_K) \tens[2]{w}, \tens{RM}_K\rangle_{\tens{L}_2(K)}\\&=
\langle \tens{{\mathcal{Q}}}_K^*\tens{\divv}\tens[2]{w}-\tens{\divv}\tens[2]{{\mathcal{P}}}_K \tens[2]{w}, \tens{RM}_K\rangle_{\tens{L}_2(K)}, 
\end{align*}
where we used that $\ran \tens{\divv}|_{\tens[2]{W}_K} \subset \tens V_T$. Thanks to $\tens{{\mathcal{Q}}}_K^*\tens{\divv}\tens[2]{w}-\tens{\divv}\tens[2]{{\mathcal{P}}}_K \tens[2]{w} \in \tens V_T$ we conclude that
$$
\tens{\divv}\,\tens[2]{{\mathcal{P}}}_K=\tens{{\mathcal{Q}}}_K^* \,\tens{\divv} \quad \text{on } H^1(K,\tens[2]{\mathbb{S}}).
$$
From this \emph{commuting diagram property}, using that $ \tens{{\mathcal{Q}}}_K^*$ reproduces all constant vectors we infer that
\be \label{24}
\|\tens{\divv}(\tens[2]{w} - \tens[2]{{\mathcal{P}}}_K \tens[2]{w})\|_{\tens{L}_2(K)}=
\|(\identity - \tens{{\mathcal{Q}}}_K^*) \tens{\divv} \,\tens[2]{w}\|_{\tens{L}_2(K)} \lesssim \diam(K) \|\tens{\divv} \,\tens[2]{w}\|_{\tens{H}^1(K)}.
\ee

 By writing
$\identity - {\mathcal{P}}_\delta \otimes \tens[2]{{\mathcal{P}}}_\delta=(\identity-{\mathcal{P}}_\delta) \otimes\identity+{\mathcal{P}}_\delta \otimes (\identity-\tens[2]{{\mathcal{P}}}_\delta)$, where 
${\mathcal{P}}_\delta$ denotes the $L_2(I)$-orthogonal projector onto $W^t_\delta$, the proof follows from
\eqref{23} and \eqref{24}.
\end{proof}

\subsection{Trial space for the pressure, and combined a priori error estimate}

For the pressure, we employ the standard trial space of piecewise constants.

\begin{proposition} \label{prop:pressure} For $P_\delta$ being the space of piecewise constants w.r.t.~$I_\delta \times \Omega_\delta$, and $p \in H^1(I \times \Omega)$, it holds that
$$
\inf_{q \in P_\delta}  \|p-q\|_{L_2(I \times \Omega)} \lesssim h_\delta \|p\|_{H^1(I \times \Omega)}.
$$
\end{proposition}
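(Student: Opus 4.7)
The plan is to take $q \in P_\delta$ to be the $L_2(I\times\Omega)$-orthogonal projection of $p$ onto the piecewise constants, which on each prismatic element $P = J \times K$ (with $J \in I_\delta$, $K \in \Omega_\delta$) coincides with the constant $\bar{p}_P := |P|^{-1} \int_P p \,dx\,dt$. Since the $L_2$-projection is the best $L_2$-approximation, it suffices to bound $\|p - \bar{p}_P\|_{L_2(P)}$ element-by-element and sum.

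First I would observe that each prism $P = J \times K$ is convex (as a Cartesian product of an interval and a triangle), so the Poincaré--Wirtinger inequality on $P$ with the mean-zero constant applies with an explicit constant proportional to $\diam(P)$:
$$
\|p - \bar{p}_P\|_{L_2(P)} \lesssim \diam(P)\, |p|_{H^1(P)}.
$$
Next I would note that $\diam(P) \leq \diam(J) + \diam(K) \leq 2 h_\delta$ by the definition of $h_\delta$, so the elementwise estimate reads $\|p - \bar{p}_P\|_{L_2(P)} \lesssim h_\delta |p|_{H^1(P)}$.

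Finally, squaring and summing over all prisms $P \in I_\delta \times \Omega_\delta$, using the fact that the $H^1$-seminorms over elements add up to the global seminorm, gives
$$
\|p - q\|_{L_2(I\times\Omega)}^2 = \sum_{P} \|p - \bar{p}_P\|_{L_2(P)}^2 \lesssim h_\delta^2 \sum_{P} |p|_{H^1(P)}^2 = h_\delta^2 |p|_{H^1(I\times\Omega)}^2,
$$
which yields the claim. There is no real obstacle here: the estimate is a textbook Bramble--Hilbert argument, the only minor point being to invoke convexity of the prismatic elements to apply the Poincaré constant with the correct linear dependence on $\diam(P)$ (rather than going through an affine reference element, which would require a shape-regularity argument for the anisotropic prisms).
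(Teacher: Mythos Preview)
Your argument is correct: choosing the elementwise mean and invoking the Poincar\'e--Wirtinger inequality on each convex prism $P=J\times K$ with constant proportional to $\diam(P)\le 2h_\delta$, then summing, gives the stated bound. The paper itself does not supply a proof of this proposition, treating it as a standard Bramble--Hilbert/Poincar\'e estimate, so your write-up is in fact more detailed than what appears there.
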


By combining Propositions~\ref{proprange}, \ref{prop:approx}, \ref{prop:velo}, \ref{prop:stress}, and \ref{prop:pressure}, we obtain the following a priori error estimate.

\begin{theorem} \label{corolletje}
 Let 
$\tens{\Pi} \in \cL(\tens{\mathbb{H}}^1(\Omega),\tens{\mathbb{H}}^1(\Omega))$. 
For $(\tens{f},\tens{u}_0) \in \tens{L}_2(I \times \Omega) \times  \tens{\mathcal H}^0$, let $(\tens{u},\tens[2]{w},p) \in \mathscr{Z}\times L_2(I\times \Omega)$ be the solution of  ${\bf \bar{G}}(\tens{u},\tens[2]{w},p)=(0,\tens{f},0,\tens{u}_0,0)$.
Then if $(\tens{u},p,\tens{f}) \in \tens{H}^2(I \times \Omega) \times H^1(I \times \Omega) \times \tens{H}^1(I \times \Omega)$, 
 the minimizer
$(\tens{u}_\delta,\tens[2]{w}_\delta,p_\delta)$ over $\tens{U}_\delta \times \tens[2]{W}_\delta \times P_\delta$ of
$$
\tfrac12 \|(0,\tens{f},0,\tens{u}_0,0) - {\bf \bar{G}}(\tens{u}_\delta,\tens[2]{w}_\delta,p_\delta)\|^2_{\tens[2]{L}_2(I \times \Omega) \times \tens{L}_2(I \times \Omega)\times H^1(I;L_{2,0}(\Omega)) \times \tens{L}_2(\Omega) \times L_2(I)}
$$
satisfies
$$
\|(\tens{u},\tens[2]{w},p)-(\tens{u}_\delta,\tens[2]{w}_\delta,p_\delta)\|_{\mathscr{Z} \times L_2(I \times \Omega)} \lesssim h_\delta \big(\|\tens{u}\|_{\tens{H}^2(I \times \Omega) }+\|p\|_{H^1(I \times \Omega)}+\|\tens{f}\|_{\tens{H}^1(I \times \Omega)}\big).
$$
\end{theorem}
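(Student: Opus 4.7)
The strategy is to combine the quasi-optimality bound from Proposition~\ref{prop:approx} with the three component-wise approximation estimates from Propositions~\ref{prop:velo}, \ref{prop:stress}, and~\ref{prop:pressure}, and then to trade the $\tens[2]{w}$-regularity appearing in Proposition~\ref{prop:stress} for regularity of $(\tens{u},p,\tens{f})$ via the identities $\tens[2]{w}=-\tens[2]{T}(\nu\tens{u},p)$ and $\tens{\divv}_x \tens[2]{w}=\tens{f}-\tens{\partial}_t\tens{u}$.

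First I would invoke Proposition~\ref{proprange}: since $(\tens{f},\tens{u}_0) \in \tens{L}_2(I\times\Omega)\times\tens{\mathcal H}^0(\Omega)$, there is a unique $(\tens{u},\tens[2]{w},p)\in\mathscr{Z}\times L_2(I\times\Omega)$ with ${\bf \bar G}(\tens{u},\tens[2]{w},p)=(0,\tens{f},0,\tens{u}_0,0)$, and this solution satisfies $\divv_x \tens{u}=0$ as well as $Mp=0$. By Proposition~\ref{prop:approx},
\[
\|(\tens{u},\tens[2]{w},p)-(\tens{u}_\delta,\tens[2]{w}_\delta,p_\delta)\|_{\mathscr{Z}\times L_2(I\times\Omega)} \lesssim \inf_{(\hat{\tens u},\hat{\tens[2] w},\hat p)\in \tens U_\delta\times\tens[2]W_\delta\times P_\delta}\|(\tens{u}-\hat{\tens u},\tens[2]{w}-\hat{\tens[2] w},p-\hat p)\|_{\mathscr{Z}\times L_2(I\times\Omega)},
\]
and it suffices to bound the infimum by choosing $\hat{\tens u},\hat{\tens[2] w},\hat p$ separately in each factor. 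The embedding $\tens U\times L_2(I;H_0(\tens{\divv};\Omega,\tens[2]{\mathbb{S}}))\hookrightarrow\mathscr Z$ noted just after \eqref{22} lets me replace the $\mathscr Z$-norm of the error pair by the sum of the $\tens U$-norm of the velocity error and the $L_2(I;H(\tens{\divv};\Omega,\tens[2]{\mathbb{S}}))$-norm of the stress error.

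For the velocity factor I apply Proposition~\ref{prop:velo} in the divergence-free regime $\divv_x\tens{u}=0$, which yields a bound of order $h_\delta\|\tens{u}\|_{\tens H^2(I\times\Omega)}$. For the pressure factor Proposition~\ref{prop:pressure} gives $h_\delta\|p\|_{H^1(I\times\Omega)}$. The stress tensor factor is the place requiring the most care: Proposition~\ref{prop:stress} provides
\[
\inf_{\tens[2] v\in\tens[2]W_\delta}\|\tens[2]{w}-\tens[2] v\|_{L_2(I;H(\tens{\divv};\Omega,\tens[2]{\mathbb{S}}))} \lesssim h_\delta \bigl(\|\tens[2]{w}\|_{H^1(I;H(\tens{\divv};\Omega,\tens[2]{\mathbb{S}}))}+\|\tens[2]{w}\|_{L_2(I;H^1(\Omega,\tens[2]{\mathbb{S}}))}+\|\tens{\divv}_x\tens[2]{w}\|_{L_2(I;\tens H^1(\Omega))}\bigr),
\]
so I must verify that each right-hand norm of $\tens[2]{w}$ is controlled by $\|\tens{u}\|_{\tens H^2(I\times\Omega)}+\|p\|_{H^1(I\times\Omega)}+\|\tens f\|_{\tens H^1(I\times\Omega)}$. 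Using $\tens[2]{w}=-\nu\tens[2]D(\tens{u})+p\identity$, I get $\|\tens[2]{w}\|_{L_2(I;H^1(\Omega))}\lesssim \|\tens{u}\|_{\tens H^2(I\times\Omega)}+\|p\|_{H^1(I\times\Omega)}$ and (the $L_2(I;L_2)$ part of the) $H^1(I;H(\tens{\divv}))$-norm likewise. Substituting $\tens{\divv}_x \tens[2]{w}=\tens{f}-\tens{\partial}_t\tens{u}$ handles the remaining divergence contributions: $\|\tens{\divv}_x\tens[2]{w}\|_{L_2(I;\tens H^1(\Omega))}\lesssim \|\tens f\|_{\tens H^1(I\times\Omega)}+\|\tens u\|_{\tens H^2(I\times\Omega)}$, and $\|\partial_t\tens{\divv}_x\tens[2]{w}\|_{L_2(I;\tens L_2(\Omega))}\lesssim \|\tens f\|_{H^1(I;\tens L_2)}+\|\tens u\|_{H^2(I;\tens L_2)}$.

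Finally, adding these three estimates and combining with the quasi-optimality bound yields the asserted rate $h_\delta(\|\tens{u}\|_{\tens H^2(I\times\Omega)}+\|p\|_{H^1(I\times\Omega)}+\|\tens f\|_{\tens H^1(I\times\Omega)})$. The main obstacle I anticipate is bookkeeping rather than mathematics: one has to check that all the regularity required by Proposition~\ref{prop:stress} is indeed implied by the stated $(\tens u,p,\tens f)$-regularity (most notably, that $\tens[2]{w}\in H^1(I;H(\tens{\divv}))$ follows because the time derivative of $\tens{\divv}_x\tens[2]{w}$ is controlled via the momentum equation), and that the quasi-optimality in the $\mathscr Z$-norm is not spoiled by the fact that the best approximations of $\tens u$, $\tens[2]{w}$, and $p$ are chosen independently — this is fine because the $\mathscr Z\times L_2(I\times\Omega)$-norm splits as a (squared) sum of the component norms listed after \eqref{21}--\eqref{22}, each of which is bounded by the corresponding component approximation.
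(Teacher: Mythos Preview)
Your proposal is correct and follows exactly the route the paper takes: combine Propositions~\ref{proprange}, \ref{prop:approx}, \ref{prop:velo}, \ref{prop:stress}, and~\ref{prop:pressure}, and use $\tens{\divv}_x\tens[2]{w}=\tens{f}-\tens{\partial}_t\tens{u}$ to reduce the $\tens[2]{w}$-regularity in Proposition~\ref{prop:stress} to the assumed regularity of $(\tens{u},p,\tens{f})$. The paper's own proof is literally the one-line observation about $\tens{\divv}_x\tens[2]{w}$; your write-up simply makes the bookkeeping explicit (one small imprecision: the $\mathscr{Z}$-norm does not literally ``split'' over $(\tens{u},\tens[2]{w})$ because of the coupled term $\|\tens{\partial}_t\tens{u}+\tens{\divv}_x\tens[2]{w}\|$, but your earlier appeal to the embedding $\tens{U}\times L_2(I;H_0(\tens{\divv};\Omega,\tens[2]{\mathbb{S}}))\hookrightarrow\mathscr{Z}$ is the correct way to handle this and already covers it).
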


\begin{proof} Using that $\tens{\divv}_x \tens[2]{w}=\tens{f}-\tens{\partial}_t \tens{u}$, the results follows.
\end{proof}

\section{Numerical experiments}\label{sec:numerics}
In this section we consider $I=(0,1)$ and $\Omega$ to be either the unit square $(0,1)^2$ or the L-shaped domain $(-1,1)^2\setminus[-1,0]^2$. 

To construct test problems for \eqref{free-slip-equations} with $\nu=1$, $g=0$, and a known solution, we need to find a $\tens{u}$ with $\divv_x \tens{u}=0$ on $I \times \Omega$, and $\tens{u} \cdot \tens{n}=0=\tens[2]{D}(\tens{u}) \tens{n}\cdot \tens{\tau}$ on $I \times \partial \Omega$. By taking $\tens{u}=\tens{\curl}_x \psi$, the first condition is satisfied, whereas the second and third conditions read as $\partial_{\tens{\tau}} \psi=0$ on $I \times \partial \Omega$, and $4  \tens{n}_1 \tens{n}_2 \partial_{x_1}  \partial_{x_2} \psi+
(\tens{n}_2^2-\tens{n}_1^2) (\partial_{x_2}^2-\partial_{x_1}^2) \psi =0$ on $I \times \partial \Omega$, respectively.

On both domains we prescribe the exact solution
$$
\tens{u}(t,x_1,x_2) := \exp(-t) \, \tens{\curl}_x \frac{\sin(\pi x_1)\sin(\pi x_2)}{\pi}
$$
which satisfies aforementioned conditions,
$$
p(t,x_1,x_2) := \exp(-t) \sin\big(\pi(x_1-x_2)\big),
$$
and $\tens[2]{w}:=-\tens[2]{T}(\nu\tens{u},p)$, and take the data $\tens{u}_0(x_1,x_2):=u(0,x_1,x_2)$, $\tens{f}:=\tens{\partial}_t \tens{u} -\nu \tens{\Delta}_{x} \tens{u} +\nabla_{\bf x}\, p$, and $F:=(0,\tens{f},0,\tens{u}_0,0)$ correspondingly.
 
We compute the least-squares approximation $(\tens{u}_\delta,\tens[2]{w}_\delta,p_\delta)$ of $(\tens{u},\tens[2]{w},p)$ in the triple of lowest order finite element spaces $\tens{U}_\delta\times \tens[2]{W}_\delta\times P_\delta$
specified in Section~\ref{sec:a priori} w.r.t.~a partition of $I \times \Omega$ from a sequence of partitions created by subsequent uniform refinements starting from 
$I$ times the initial triangulations of $\Omega$ from Figure~\ref{eq:geometries}.
\begin{figure}[h]
\begin{tikzpicture}
\tikzmath{\c=4;}
\draw[line width = 0.25mm] (0,0) -- (\c,0) -- (\c,\c) -- (0,\c) -- cycle;
\draw[line width = 0.25mm] (0,0) -- (\c,\c);
\draw[line width = 0.25mm] (\c,0) -- (0,\c);
\fill[red] (\c/2,\c/2) circle (3pt);
\end{tikzpicture}
\qquad\qquad
\begin{tikzpicture}
\tikzmath{\c=2;}
\draw[line width = 0.25mm] (0,0) -- (\c,0) -- (\c,\c) -- (0,\c) -- cycle;
\draw[line width = 0.25mm] (0,0) -- (\c,\c);
\draw[line width = 0.25mm] (\c,0) -- (0,\c);
\draw[line width = 0.25mm] (0,-\c) -- (\c,-\c) -- (\c,0) -- (0,0) -- cycle;
\draw[line width = 0.25mm] (0,-\c) -- (\c,0);
\draw[line width = 0.25mm] (\c,-\c) -- (0,0);
\draw[line width = 0.25mm] (-\c,0) -- (0,0) -- (0,\c) -- (-\c,\c) -- cycle;
\draw[line width = 0.25mm] (-\c,0) -- (0,\c);
\draw[line width = 0.25mm] (0,0) -- (-\c,\c);
\fill[red] (\c/2,\c/2) circle (3pt);
\fill[red] (-\c/2,\c/2) circle (3pt);
\fill[red] (\c/2,-\c/2) circle (3pt);
\end{tikzpicture}
\caption{Unit square $\Omega=(0,1)^2$ and L-shaped domain $\Omega=(-1,1)^2\setminus[-1,0]^2$ with initial triangulation. The newest vertices are marked with a dot.}\label{eq:geometries}
\end{figure}
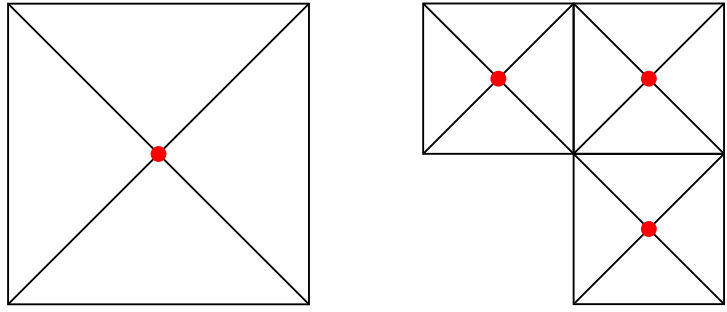
Each uniform refinement consists of splitting each prismatic element into 8 subelements by applying one bisection in time and two recursive newest vertex bisections in space.

The approximations are computed as the solution in $\tens{U}_\delta\times\tens[2]{W}_\delta\times P_\delta$ of 
$$
\langle {\bf \bar{G}} (\tens{u}_\delta,\tens[2]{w}_\delta,p_\delta),{\bf \bar{G}} (\tens{\hat{u}},\tens[2]{\hat{w}},\hat{p})\rangle_{\mathscr{F}} =\langle F,{\bf \bar{G}} (\tens{\hat{u}},\tens[2]{\hat{w}},\hat{p})\rangle_{\mathscr{F}} \quad((\tens{\hat{u}},\tens[2]{\hat{w}},\hat{p}) \in \tens{U}_\delta\times\tens[2]{W}_\delta\times P_\delta).
$$
While the corresponding Galerkin matrix is not sparse due to the presence of the operator $M$ from \eqref{defM}, the matrix can easily be applied in linear complexity.
We thus compute the discretizations via PCG with diagonal preconditioner. 

Note that on the L-shaped domain well-posedness of our FOSLS is not guaranteed by our theory as the equivalence of norms from Proposition~\ref{prop1} has only been shown for convex domains or domains with a $C^2$-boundary (see Remark~\ref{rem:convex}).
With $\frak{M}_\delta$ or $\frak{m}_\delta$ defined as the supremum or infimum over $\tens{U}_\delta\times \tens[2]{W}_\delta\times P_\delta$ of the right-hand sides of \eqref{constantM} or \eqref{constantm}, respectively, in view of the estimate from \eqref{6}
in Table~\ref{tab:ratios} we provide the values of the quotients $\frak{M}_\delta/\frak{m}_\delta$ for decreasing mesh-sizes $h_\delta=\max(\max_{J \in I_\delta} \diam J,\max_{K \in \Omega_\delta} \diam K)$. 
These can be computed by solving a discrete general eigenvalue problem. On the unit square, the constants remain uniformly bounded in correspondence with Corollary~\ref{corolbarG}. While the constants keep slightly increasing on the L-shaped domain as the mesh is refined, suggesting that  generally Corollary~\ref{corolbarG} is \emph{not} valid without $H^2(\Omega)$-regularity of the Poisson problem (cf. Proposition~\ref{propje}),
they remain relatively small and one might hope for reasonable approximations $(\tens{u}_\delta,\tens[2]{w}_\delta,p_\delta)$. 

As announced in Remark~\ref{rem:dtdiv}, in Table~\ref{tab:ratios} we further consider the case where the component $\divv_x \tens{u}$ in the least-squares functional is measured in $L_2(I;L_{2,0}(\Omega))$ instead of in $H^1(I;L_{2,0}(\Omega))$. 
In this case, we also remove the condition $\divv_x \tens{u}$ $\in$ $H^1(I;L_{2,0}(\Omega))$ in the definition of the solution space $\mathscr{Z}$.
The constants $\frak{M}_\delta$ and $\frak{m}_\delta$ are defined accordingly.
The increasing values of $\frak{M}_\delta/\frak{m}_\delta$ indicate that this modified FOSLS is not well-posed. 
Finally, as discussed in Remark~\ref{rem:noslip}, we briefly consider no-slip boundary conditions adapting the boundary conditions of the solution space and its finite-dimensional trial spaces accordingly.
The values of the quotient $\frak{M}_\delta/\frak{m}_\delta$ give an indication that for these boundary conditions the FOSLS is not well-posed.

\begin{center}
\begin{table}[h]
\caption{Ratios $\frak{M}_\delta/\frak{m}_\delta$ for decreasing mesh-sizes $h_\delta$ on the unit square $\Omega=(0,1)^2$ and the L-shape $\Omega=(-1,1)^2\setminus[-1,0]^2$.
On the square, we measure the component $\divv_x \tens u$ in $H^1(I;L_{2,0}(\Omega))$ or $L_2(I;L_{2,0}(\Omega))$, and consider slip or no-slip boundary conditions.}
\label{tab:ratios}
\begin{tabular}{l | c | c | c | c | c | c}
Mesh-size & $2^0$ & $2^{-1}$ & $2^{-2}$ & $2^{-3}$ & $2^{-4}$ & $2^{-5}$\\
\hline
Square (slip, $\partial_t\divv_x \tens{u}\in L_2$) & 3.73 & 6.75 & 6.81 & 6.82 & 6.82 & 6.82\\
L-shape (slip, $\partial_t\divv_x \tens{u}\in L_2$) & 7.65 & 9.23 & 10.73 & 12.22 & 13.59 & 14.81\\ 
Square (slip, $\divv_x \tens{u}\in L_2$) & 3.73 & 6.88 & 7.37 & 8.21 & 10.96 & 18.88\\
Square (no-slip, $\partial_t\divv_x \tens{u}\in L_2$) & 5.92 & 7.94  & 10.62 & 13.36 & 15.28 & 16.72\\
\end{tabular}
\end{table}
\end{center}

\subsection{Experiment on unit square}\label{sec:square}
In Figure~\ref{fig:square} we plot the least-squares estimator
$\eta_\delta:=\|F-{\bf \bar{G}}(\tens{u}_\delta,\tens[2]{w}_\delta,p_\delta)\|_{\mathscr{F}}$.
As predicted in Section~\ref{sec:a priori}, the convergence rate on the unit square is given by $\mathcal{O}({\rm dofs}^{-1/3}) = \mathcal{O}(h_\delta)$. 
We also plot the errors 
$\tens{u}-\tens{u}_\delta$, $\tens[2]{w}-\tens[2]{w}_\delta$, 
$\tens{\partial}_t(\tens{u}-\tens{u}_\delta)+\tens{\divv}_x(\tens[2]{w}-\tens[2]{w}_\delta)=
\tens{f}-(\tens{\partial}_t \tens{u}_\delta+\tens{\divv}_x\tens[2]{w}_\delta)$,
and $p-p_\delta$ measured in the norms $\big(\|\cdot\|^2_{L_2(I;\tens{H}^1(\Omega))}+\|\divv_x \cdot\|_{H^1(I;L_{2}(\Omega))}^2\big)^{1/2}$,  $\|\cdot\|_{\tens[2]{L}_2(I \times \Omega)}$, $\|\cdot\|_{\tens{L}_2(I \times \Omega)}$ and $\|\cdot\|_{L_2(I\times\Omega)}$. 
In Corollary~\ref{corolbarG} the sum of  these quantities were proven to be equivalent to the estimator. 
We thus expect and indeed observe the same convergence rate $\mathcal{O}({\rm dofs}^{-1/3})$.

\begin{remark} In this, as well as in other examples that we have tested, we got similar, actually slightly better convergence behavior (in terms of a multiplicative constant) when we omitted the bubbles from the space $\tens{U}_\delta$, i.e., when we replaced $\tens{U}_\delta$ by $\tens{\widehat{U}}_\delta$.
A possible explanation might be that for the regular triangulations that we have used, there exists a suitable quasi-interpolator 
to the space of continuous piecewise linear vector fields that maps divergence-free functions to divergence-free functions, similar to $\tens{\mathcal J}_\delta$ from \eqref{16} for this space enriched with edge bubbles.
\end{remark}

\subsection{Experiment on L-shape}\label{sec:lshape}
In Figure~\ref{fig:lshape} we plot again the least-squares estimator as well as the errors measured in different norms. 
Although not covered by the theory anymore, we observe in each case the same convergence rate $\mathcal{O}({\rm dofs}^{-1/3})$.


\begin{center}
\begin{figure}
\begin{tikzpicture}
\begin{loglogaxis}[
width = 0.75\textwidth,
clip = true,
xlabel= {degrees of freedom},
ylabel = {estimator and errors},
grid = major,
legend pos = south west,
legend entries = {$\eta_\delta$\\$\tens{u}-\tens{u}_\delta$\\$\tens[2]{w}-\tens[2]{w}_\delta$\\$\tens f-\tens\divv(\tens u_\delta,\tens[2] w_\delta)$\\$p-p_\delta$\\} 
]
\addplot[line width = 0.25mm,color=red,mark=*] table[x=dofs,y=eta,/pgf/number format/read comma as period] {geo0dir1vel1str1.csv};
\addplot[line width = 0.25mm,color=blue,mark=x] table[x=dofs,y=err_u] {geo0dir1vel1str1.csv};
\addplot[line width = 0.25mm,color=green,mark=+] table[x=dofs,y=err_w] {geo0dir1vel1str1.csv};
\addplot[line width = 0.25mm,color=cyan,mark=asterisk] table[x=dofs,y=err_pde] {geo0dir1vel1str1.csv};
\addplot[line width = 0.25mm,color=brown,mark=triangle*] table[x=dofs,y=err_p] {geo0dir1vel1str1.csv};

\draw[line width = 0.25mm] (axis cs:1e5,1e0/1.5) -- (axis cs:1e6,1e0/1.5) -- (axis cs:1e6,0.1^0.333/1.5) -- cycle;
\node[above] at (axis cs:10^5.5,1/1.5) {$1$};
\node[right] at (axis cs:1e6,0.1^0.1666/1.5) {$\frac13$};
\end{loglogaxis}
\end{tikzpicture}
\caption{\label{fig:square}
Least-squares estimator
$\eta_\delta=\|F-{\bf \bar{G}}(\tens{u}_\delta,\tens[2]{w}_\delta,p_\delta)\|_{\mathscr{F}}$ and
errors 
$\tens{u}-\tens{u}_\delta$, $\tens[2]{w}-\tens[2]{w}_\delta$, 
$\tens{f}-(\tens{\partial}_t \tens{u}_\delta+\tens{\divv}_x\tens[2]{w}_\delta)$,
and $p-p_\delta$ measured in $\big(\|\cdot\|^2_{L_2(I;\tens{H}^1(\Omega))}+\|\divv_x \cdot\|_{H^1(I;L_{2}(\Omega))}^2\big)^{1/2}$,  $\|\cdot\|_{\tens[2]{L}_2(I \times \Omega)}$, $\|\cdot\|_{\tens{L}_2(I \times \Omega)}$ and $\|\cdot\|_{L_2(I\times\Omega)}$, respectively, for the problem on the unit square from Section~\ref{sec:numerics}.}
\end{figure}
\end{center}


\begin{center}
\begin{figure}
\begin{tikzpicture}
\begin{loglogaxis}[
width = 0.75\textwidth,
clip = true,
xlabel= {degrees of freedom},
ylabel = {estimator and errors},
grid = major,
legend pos = south west,
legend entries = {$\eta_\delta$\\$\tens{u}-\tens{u}_\delta$\\$\tens[2]{w}-\tens[2]{w}_\delta$\\$\tens f-\tens\divv(\tens u_\delta,\tens[2] w_\delta)$\\$p-p_\delta$\\} 
]
\addplot[line width = 0.25mm,color=red,mark=*] table[x=dofs,y=eta,/pgf/number format/read comma as period] {geo1dir1vel1str1c1.csv};
\addplot[line width = 0.25mm,color=blue,mark=x] table[x=dofs,y=err_u] {geo1dir1vel1str1c1.csv};
\addplot[line width = 0.25mm,color=green,mark=+] table[x=dofs,y=err_w] {geo1dir1vel1str1c1.csv};
\addplot[line width = 0.25mm,color=cyan,mark=asterisk] table[x=dofs,y=err_pde] {geo1dir1vel1str1.csv};
\addplot[line width = 0.25mm,color=brown,mark=triangle*] table[x=dofs,y=err_p] {geo1dir1vel1str1c1.csv};
\draw[line width = 0.25mm] (axis cs:1e5*4,1e0) -- (axis cs:1e6*4,1e0) -- (axis cs:1e6*4,0.1^0.333) -- cycle;
\node[above] at (axis cs:10^5.5*4,1) {$1$};
\node[right] at (axis cs:1e6*4,0.1^0.1666) {$\frac13$};
\end{loglogaxis}
\end{tikzpicture}
\caption{\label{fig:lshape}
Least-squares estimator
$\eta_\delta:=\|F-{\bf \bar{G}}(\tens{u}_\delta,\tens[2]{w}_\delta,p_\delta)\|_{\mathscr{F}}$ and
 errors 
$\tens{u}-\tens{u}_\delta$, $\tens[2]{w}-\tens[2]{w}_\delta$, 
$\tens{f}-(\tens{\partial}_t \tens{u}_\delta+\tens{\divv}_x\tens[2]{w}_\delta)$,
and $p-p_\delta$ measured in $\big(\|\cdot\|^2_{L_2(I;\tens{H}^1(\Omega))}+\|\divv_x \cdot\|_{H^1(I;L_{2}(\Omega))}^2\big)^{1/2}$,  $\|\cdot\|_{\tens[2]{L}_2(I \times \Omega)}$, $\|\cdot\|_{\tens{L}_2(I \times \Omega)}$ and $\|\cdot\|_{L_2(I\times\Omega)}$, respectively, for the problem on the L-shape from Section~\ref{sec:numerics}. }
\end{figure}
\end{center}

\bibliographystyle{modalpha}
\bibliography{../../ref}

\end{document}